\let\citep\cite
\newcommand{\cur}{\mathrm{curl}}
\newcommand{\id}{\mathrm{d}}
\newcommand{\dx}{\ \mathrm{d}x}
\newcommand{\J}{\mathcal{J}}
\newcommand{\mH}{\mathcal{H}}
\newcommand{\mL}{\mathcal{L}}
\newcommand{\E}{\mathcal{E}}
\newcommand{\T}{\mathcal{T}}
\newcommand{\R}{\mathbb{R}}
\theoremstyle{plain}
\newtheorem{Theorem}{Theorem}
\newtheorem{Lemma}{Lemma}
\newtheorem{corollary}{Corollary}
\theoremstyle{definition}
\newtheorem{definition}{Definition}
\newtheorem{assumption}{Assumption}
\theoremstyle{remark}
\newtheorem{remark}{Remark}
\begin{document}

\setcounter{page}{1}

\title{Robust Topology Optimization of Electric Machines using Topological Derivatives}
\author{Peter~Gangl$^1$         \and
        Theodor~Komann$^2$  \and
        Nepomuk~Krenn$^1$ \and
Stefan~Ulbrich$^2$
}
\date{$^1$RICAM, Austrian Academy of Sciences, \\
Altenberger Stra{\ss}e 69, 4040 Linz, Austria\\
$^2$Department of Mathematics, TU Darmstadt, \\
Dolivostraße~15, 64297 Darmstadt, Germany}

\maketitle
\begin{abstract}
Designing high-performance electric machines that maintain their efficiency and reliability under uncertain material and operating conditions is crucial for industrial applications. In this paper, we present a novel framework for robust topology optimization with partial differential equation  constraints to address this challenge. The robust optimization problem is formulated as a min-max optimization problem, where the inner maximization is the worst case with respect to predefined uncertainties, while the outer minimization aims to find an optimal topology that remains robust under these uncertainties using the topological derivative. The shape of the domain is  represented by a level set function, which allows for arbitrary perturbation of the domain. The robust optimization problem is solved using a theorem of Clarke  to compute subgradients of the worst case function. This allows the min-max problem to be solved efficiently and ensures that we find a design that performs well even in the presence of uncertainty. Finally, numerical results for a two-material permanent magnet synchronous machine demonstrate both the effectiveness of the method and the improved performance of robust designs under uncertain conditions.

\end{abstract}
\section{Introduction}
\label{Introduction}
Electric machines play an important role in modern industrial applications, including automotive, aerospace, and renewable energy systems \citep{El-Refaie2017}. In light of recent regulatory developments—such as the European Union’s approval of a 2035 phaseout of CO$_2$-emitting vehicles—these machines are expected to become even more important in achieving the transition towards sustainable mobility. Consequently, various factors such as efficiency, performance and manufacturability must be carefully considered when designing these machines. In this context, topology optimization with the consideration of constraints is a popular and  useful technique for designing electric machines with the goal of finding optimal material distribution, leading to improved performance and reduced material costs \citep{Nishanth2022,Gangl_2017ab,Kuci_2018aa,Sanogo_2018aa}.
Topology optimization methodologies can be broadly split into two types: genetic and gradient-based. Genetic optimization algorithms have been extensively applied to topology optimization of  electric machines due to their ability to explore complex design spaces without requiring derivative information   \citep{Sato2015,Bramerdorfer2018}. An advantage of genetic algorithms is the ability to bypass local minima, thereby increasing the likelihood of finding global minima. This is especially useful for topology optimization, where many locally minimal designs are common. However, these methods are black-box algorithms, have a small and by parametrization restricted design space  and can require prohibitively long computation times, which can take days or even weeks. Since our application involves an electric machine with an advanced design space because the physical system is described by a PDE, we will use a derivative-based algorithm in this work. Such an approach can use sensitivity information, allowing us to efficiently explore the design space and avoid the extensive computational costs associated with genetic methods. 

Derivative-based topology optimization methods can be classified into density-based and level set approaches. Density-based methods represent the design by a density variable taking values between 0 and 1 and interpolate material properties accordingly. 
The density-based topology optimization in the field of electromagnetism was introduced by Dyck and Lowther in 1996 \citep{Lowther1996}, building on earlier foundational work by Bendsøe in 1989, who described topology optimization as a problem of distributing materials first \citep{Bendsøe1989}. One prominent approach is the \textit{Solid Isotropic Material with Penalization }(SIMP) method, where intermediate material densities are penalized due to there nonphysical behavior. Recently in \citep{Theodore2022} this was successfully applied in multi-material topology optimization using Wachspress interpolation techniques for the design of a 3-phase electrical machine, addressing challenges such as numerical instabilities and sensitivity filtering. Moreover, the density method has been effectively used to optimize electric machines in \citep{Houta2024, Kuci_2018aa}. It is well established and can lead to efficient electromagnetic designs, but choosing an appropriate interpolation function can be challenging, especially when dealing with nonlinear or multi physical problems \citep{Sanogo_2018aa}.
In contrast level set methods represent the material distribution and topology using a level set function that implicitly defines different material phases and thus divides the design domain into separate subdomains.
In topology optimization, the evolution of this level set function can be achieved by computing sensitivities, such as shape derivatives or shape gradients, which are inserted into the Hamilton-Jacobi equation  \citep{Allaire_2004aa} or by the topological derivative \citep{Gangl_2016aa,Amstutz_2006ab}. We will use the level set approach with the topological derivative in this work, as it offers great flexibility for changing the initial design and is well-suited for electric machines due to its ability to precisely define material interfaces and explore complex geometries.

While all these approaches have improved electric machine design, a major gap remains: they often fail to address robustness with respect to manufacturing tolerances and parameter uncertainties. Such uncertainties are important in practice, since even small geometric deviations or variations in material properties can  worsen the performance and cause a design that appears optimal in theory to perform poorly in reality \citep{Lei2021,Ben-Tal_2002aa}. There are two main approaches to handle these uncertainties. The first is to consider a stochastic optimization problem, which assumes that uncertainty can be described probabilistically. This approach  minimizes the expected value of an objective function, as discussed in \citep{Shapiro2009,birge2011introduction} and for optimization problems constrained by a partial differential equation (PDE), in \citep{Kouri2016,Geiersbach2024}. The second, more conservative approach is robust optimization, introduced by Ben-Tal and Nemirovski \citep{Ben-Tal_2002aa}, which  minimizes the worst case objective over a predefined uncertainty set. This deterministic approach has the advantage of ensuring good performance even under the most unfavorable realization of uncertainties. In some engineering applications, ensuring such worst case performance is more desirable than only being robust in terms of the expected value.
The uncertainty affecting our model comes from two main sources: Imperfections in achieving the desired design variables, commonly referred to as \textit{implementation errors} $\delta x \in U_x$, and \textit{parameter uncertainties} $q \in U_q$, which result from inaccuracies in the problem formulation, such as noise \citep{Bertsimas_2010aa}. These sets are combined into a Cartesian product to form the overall uncertainty set: $U = U_{q} \times U_{x}$, which is closed, convex, and bounded. If \( x \) is our design variable, the robust optimization problem can be formulated as:
\begin{equation}
\min_{x} \max_{(\delta x, q) \in U} g(x + \delta x, q),
\label{eq:WorstCase1}
\end{equation}
Here, $g(x + \delta x, q)$ typically measures the performance of the quantity of interest (e.g., average torque or torque ripple in electric machines) for a given design $x$ perturbed by implementation errors $\delta x$ and parameter uncertainties $q$. Therefore the worst case function can be written as 
\begin{equation}
    f(x) = \max_{(\delta x, q) \in U} g(x + \delta x, q)
\end{equation}
Methods to solve this problem will be presented in Section~\ref{sec:robust optimization}. Figure~\ref{fig:RobustOptimizationPlot} illustrates the worst case function and the difference between a robust and a global minimum, with the left subfigure showing uncertainty in the parameter $q$, and the right subfigure showing on uncertainty in the design $\delta x$.  This robust perspective is crucial for electric machines, as ignoring uncertainties in parameter or shape can yield designs that are highly sensitive to small perturbations\citep{Alla_2019aa, Kolvenbach_2018aa, Lass_2017aa, Komann2024, Bontinck2019}. These uncertainties arise from variations in physical parameters and geometric shapes. To address uncertainties in material properties,  \citep{Putek_2016aa} applied generalized polynomial chaos expansions. This method results in electric machine designs that maintain high performance even when material properties change.
Similar observations have been made in other engineering applications. Consequently, robust optimization techniques have been applied to other topology optimization problems. In  density based topology optimization, such as for airwing structures, incorporating uncertainties into the optimization problem has resulted in designs that are not only more robust but also more practical for real-world use \citep{Silva2020}. Another example for topology optimization under uncertainty can be found in  \citep{Conti_2011aa}, who proposed a method for risk-averse shape optimization of elastic structures with uncertain loads. Their approach uses different scenarios to calculate the likelihood of design failure and applies a smooth gradient method to change the shape, allowing new structures to form. \citep{Martinez_Frutos_2016aa} used stochastic collocation together with a level set method for robust shape optimization in linear elasticity to handle uncertainties in loading and material properties. Similarly, \citep{Wu_2016aa} introduced a non-probabilistic robust topology optimization technique using interval uncertainty, where they estimate the worst case with interval arithmetic. In this paper, we present a novel robust topology optimization algorithm that combines a robust optimization technique developed in \citep{Komann2024}, which is based on \citep{Bertsimas_2010aa}, with a level set algorithm that applies the topological derivative \citep{krenn2024}. This combination allows us to efficiently solve robust topology optimization problems formulated as min-max problems, where inner maximization calculates the worst case with respect to predefined uncertainties, while the outer minimization uses the topological derivative to minimize the worst case. We illustrate our technique in a concrete engineering problem by applying it to the robust design optimization of an electric machine.

The remainder of this paper is organized as follows.
Section~\ref{sec:Physical Model} presents the physical model of the robust topology optimization problem.  Section~\ref{sec: Methodology} describes the main algorithmic steps and mathematical theory of both topology optimization and robust optimization. Section~\ref{sec: Combined Approach} combines these ideas into a unified approach to robust topology optimization using the topological derivative. Section~\ref{sec: Numerical Results} provides numerical results for a two-material PMSM, highlighting the advantages of our method. Finally, Section~\ref{sec:Conclusion} concludes the paper and discusses potential directions for future research.
\begin{figure}
    \centering
    \includegraphics[width=0.45\textwidth]{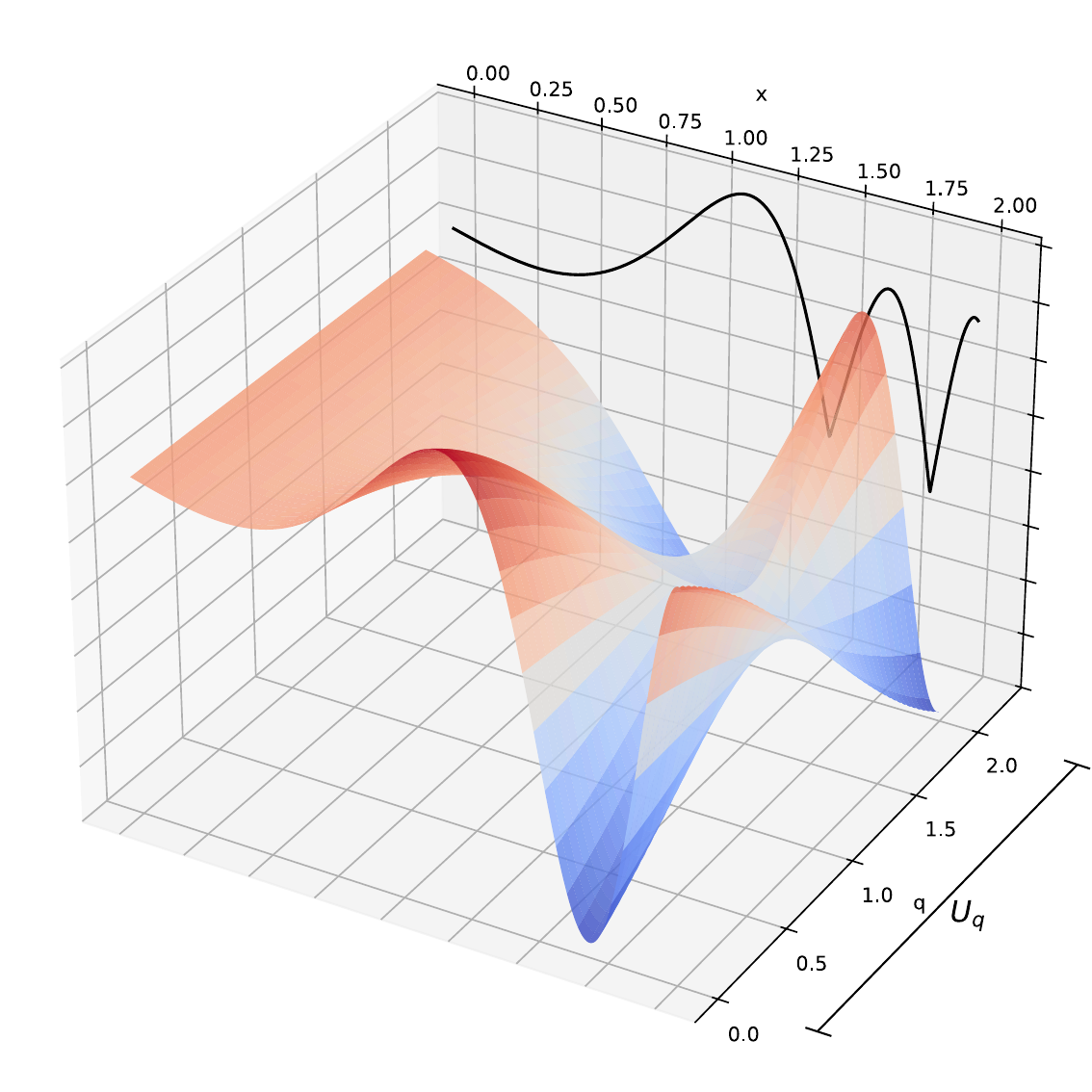}
    \includegraphics[width=0.45\textwidth]{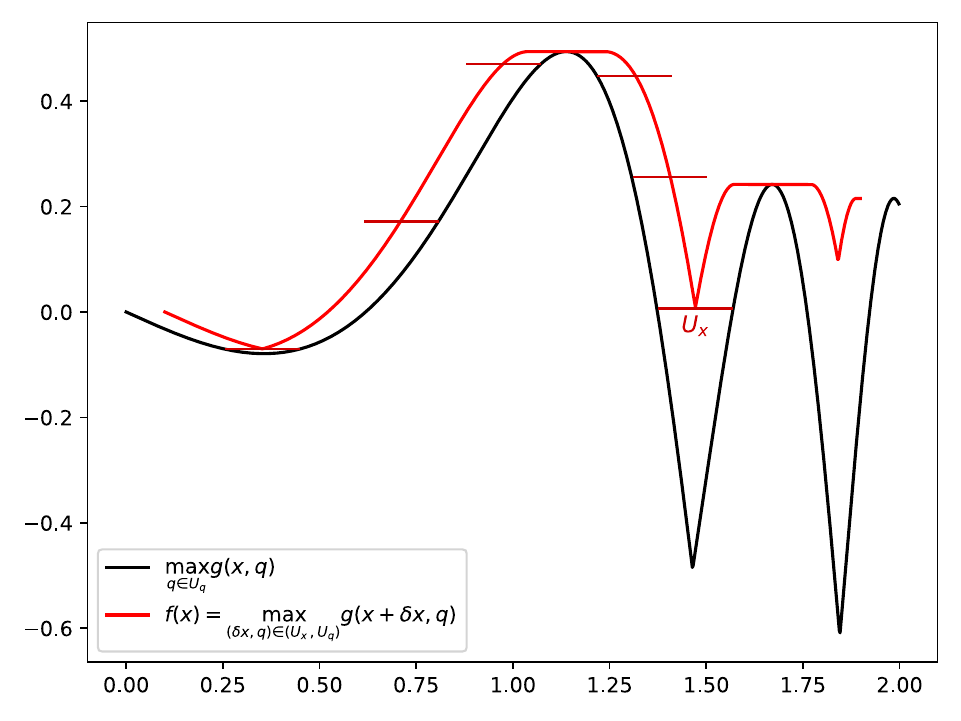}
    \caption{Example of a function $g(x,q)$ with worst case $\max_{q\in U_q}$ in black (left); worst case w.r.t. $q$ (black) and w.r.t $(\delta x,q)$, i.e. $f(x)=\max_{(\delta x,q)\in (U_x,U_q)}g(x+\delta x,q)$ (right).}
    \label{fig:RobustOptimizationPlot}
\end{figure}
\subsection{Notation}\label{sec:notation}
In this paper we will use standard notation for function spaces $C(D), C^\infty_c(D),$ $L_2(D), H^1(D)$ where $D \subset \mathbb{R}^2$ is an open domain. These space refer to continuous, smooth and compactly supported, square integrable and functions with square integrable weak first derivatives, respectively. Further we denote by $\cur v=(\partial_yv,-\partial_xv)^T$ the scalar-to-vector curl differential operator, $\widetilde{\cur}=-\partial_yv_x+\partial_xv_y$ is the vector-to-scalar curl and  $e_\varphi=(\cos\varphi,\sin\varphi)^T\in\R^2$ the unit vector with angle $\varphi$ from the positive $x$-axis. We use the open Euclidean unit ball $B_\epsilon(z):=\{x\in\R^2:\|x-z\|_2<\epsilon\}$ and denote by $\chi_\Omega$ the characteristic function of $\Omega$. By $v|_D$ we denote the restriction of the function $v$ to the domain $D$. The function $\rho_\alpha:\R²\rightarrow\R², \rho_\alpha(x,y)=R_\alpha(x,y)^T$ is the rotational coordinate transformation with the rotation matrix $R_\alpha=(e_\alpha,e_{\alpha+\frac{\pi}{2}}).$

\section{Physical Model}
\label{sec:Physical Model}
\begin{figure}
    \centering
    \includegraphics[width=0.48\textwidth]{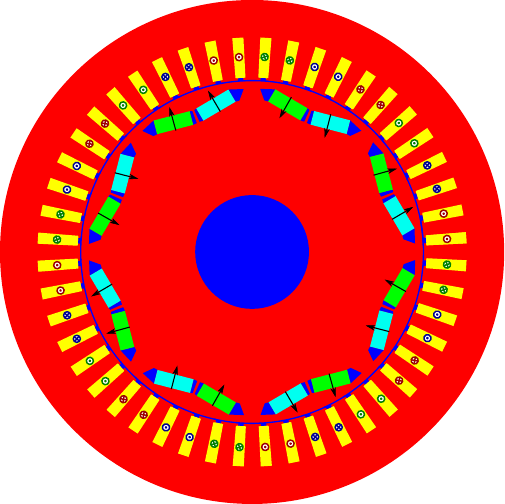}
    \includegraphics[width=0.48\textwidth]{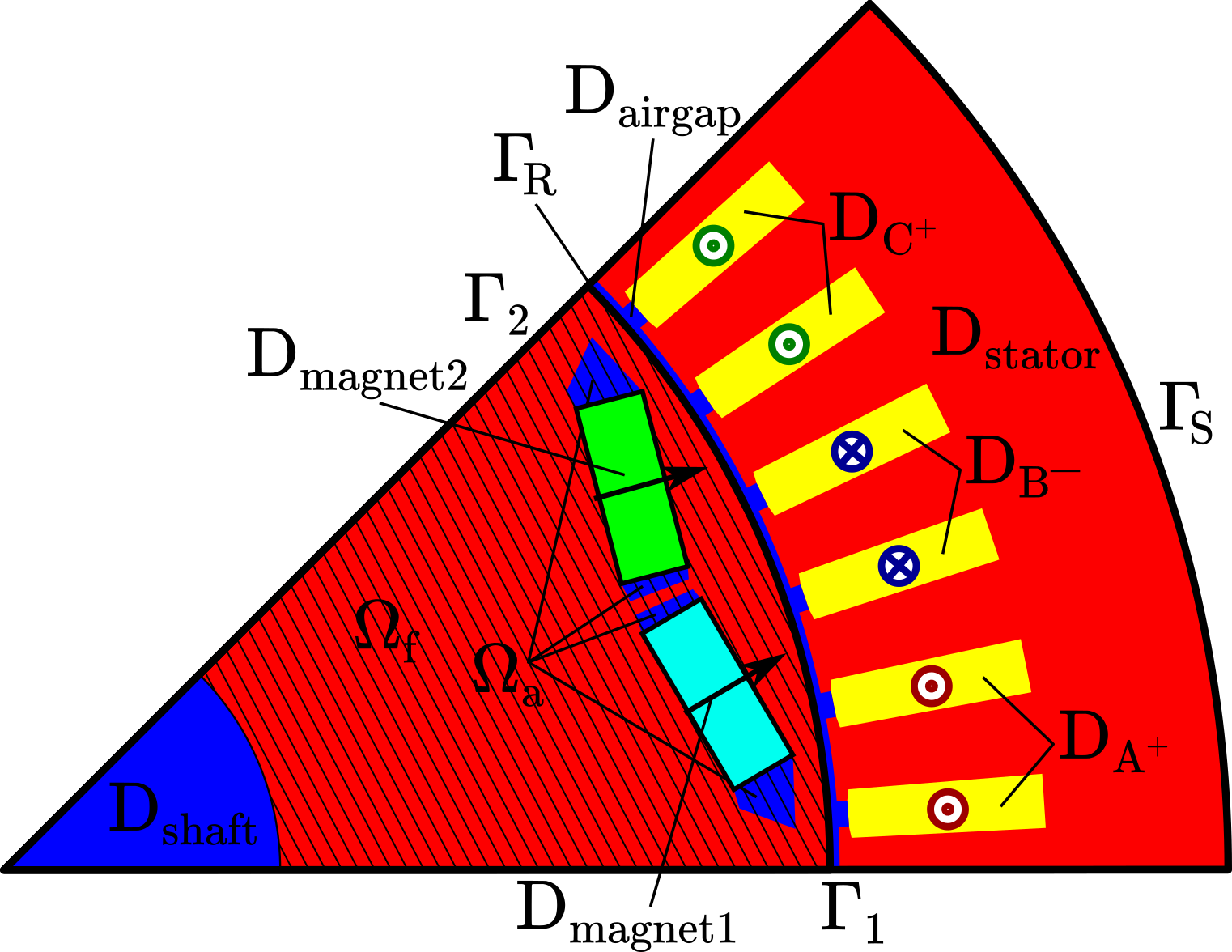}
    \caption{Left: 2d cross section of permanent magnet synchronous machine: Rotor and stator iron in red, coils in yellow with phase distribution (A: red, B: blue, C: green), permanent magnets with magnetization direction in light blue and light green, airgap, airpockets and shaft in blue. Right: One pole of the machine, the computational domain $D_\mathrm{all}$. Design domain $D$ (dashed area) consisting of rotor iron $\Omega_f\subseteq D$ (red) and rotor air $\Omega_a\subseteq D$ (blue). Rotor $D_R$ consisting of shaft $D_\mathrm{shaft},$ design domain $D$ and permanent magnets $D_\mathrm{magnet1}, D_\mathrm{magnet2}$ with outer boundary $\Gamma_R.$ Stator $D_S$ consisting of airgap $D_\mathrm{airgap}$, iron $D_\mathrm{stator}$ and coils $D_{A^+},D_{B^-},D_{C^+}$ with outer boundary $\Gamma_S$. Boundaries of pole $\Gamma_1,\Gamma_2$.}
    \label{fig:geometry}
\end{figure}
We consider a permanent magnet synchronous machine in static operation, see Figure \ref{fig:geometry} (left). The alternating source current which is impressed in the stator coils $D_A,D_B,D_C$, depicted in yellow, creates a rotating magnetic field. The interaction of this field with the field caused by the permanent magnets $D_\mathrm{magnet1}$ (light blue), $D_\mathrm{magnet2}$ (light green) produces a torque transferred by the shaft of the machine. Further we consider in our material model saturation effects  leading to a nonlinear equation. Since the iron parts $D_\mathrm{stator},\Omega_f$ (red) consist of thin isolated sheets stacked in axial direction, considering only a 2d cross section is a common approximation in electric machine optimization. Due to the periodicity of the design, the alternating directions of the permanent magnets and the antiperiodicity of the excitation currents the resulting fields are antiperiodic by one pole. It is therefore sufficient to consider a $45^\circ$ piece of the machine, see Figure \ref{fig:geometry} (right) as the computational domain imposing antisymmetric boundary conditions. These simplifications lead to the commonly used nonlinear magnetostatic approximation of the Maxwell's equations, i.e., to find the third component of the magnetic vector potential $u\in H^1(D_\mathrm{all})$ with 
\begin{align}\label{eq:initial_magnetostatic}\begin{split}
    \widetilde{\cur}h_\Omega(\cur u,q)&=j(q)\;\;\;\text{ in }D_\mathrm{all}\\
    u&=0\qquad\text{ on }\Gamma_S\\
    u|_{\Gamma_1}&=-u|_{\Gamma_2}
\end{split}
\end{align}
where $j(q)=j_{A}(q)\chi_{D_{A^+}}-j_{B}(q)\chi_{D_{B^-}}+j_{C}(q)\chi_{D_{C^+}}\in L_2(D_\mathrm{all})$ is the three phase source current and $q$ is a parameter which may be uncertain. By $\Omega:=(\Omega_f,\Omega_a)$ we denote the distribution of iron and air in the design domain $D$. The magnetic material law $h_\Omega$ defining the relation between magnetic flux $b=\cur u$ and magnetic field $h=h_\Omega(b,q)$ is given piecewise
\begin{align}
    \begin{split}h_\Omega(b,q)=&h(\Omega,b,q)\chi_{D}+h_f(b,q)\chi_{D_\mathrm{stator}}+h_a(b,q)\chi_{D_{A^+}\cup D_{B^-}\cup D_{C^+}\cup D_\mathrm{shaft}\cup D_\mathrm{airgap}}\\&+h_{m_1}(b,q)\chi_{D_\mathrm{magnet 1}}+h_{m_2}(b,q)\chi_{D_\mathrm{magnet 2}}
    \end{split}
    \label{eq:hpiecewise}
\end{align}
with $h(\Omega,b,q)=h_f(b,q)\chi_{\Omega_f}+h_a(b,q)\chi_{\Omega_a}$.
The choice of the material laws $h_f,h_a,h_m$ and the influence of the uncertain parameter $q$ will be specified in the section of numerical results, Section~\ref{sec: Numerical Results}. In order to be able to solve the magnetostatic equation \eqref{eq:initial_magnetostatic} for a rotor rotated by an arbitrary angle $\alpha$ we use the harmonic mortar approach from \citep{Egger_2022ab}. We allow $u_\alpha$ to be discontinuous at the interface $\Gamma_R$ and introduce a Lagrange multiplier $\lambda_\alpha$ to reinforce continuity in the potentially rotated setting:  Find $(u_\alpha,\lambda_\alpha)\in \mH\times\mL$ with
\begin{align}
\begin{split}
\int_{D_\mathrm{all}}h_\Omega(\cur u_\alpha,q)\cdot\cur v\dx + \langle\lambda_\alpha,(v|_{D_S}-v|_{D_R}\circ\rho_{-\alpha})\rangle_{\Gamma_R}&=\int_{D_\mathrm{all}}j(\alpha,q)v\dx\\
\langle\mu,(u_\alpha|_{D_S}-u|_{D_R}\circ\rho_{-\alpha})\rangle_{\Gamma_R} &=0
    \end{split}
    \label{eq:magnetostatic}
\end{align}
for all $(v,\mu)\in\mH\times\mL$ with $\mH=\{v:v|_{D_R}\in H^1(D_R), v|_{D_S}\in H^1(D_S), v|_{\Gamma_S}=0, v|_{\Gamma_1}=-v|_{\Gamma_2}\},\mL=H^{-\frac{1}{2}}(\Gamma_R).$ Based on energy considerations the torque produced by magneto-mechanical energy conversion can be computed by
\begin{align}\label{eq:torque}
    \T(\alpha)=T(u_\alpha,\lambda_\alpha,\alpha)=r_{\Gamma_R}\langle\lambda_\alpha,(\cur u_\alpha\cdot n_{\Gamma_R})\circ\rho_{-\alpha}\rangle_{\Gamma_R}
\end{align}
with $r_{\Gamma_R}$ the radius and $n_{\Gamma_R}$ the outer unit normal vector of the rotor. In the sequel we are interested in the average torque. Since the state $(u_\alpha,\lambda_\alpha)$ is periodic by mechanical rotations of $15^\circ$ we consider $N$ equidistant rotor positions
\begin{align}
    \alpha^n=\frac{15^\circ n}{N}, n=0,...,N-1
    \label{eq:rotorpositions}
\end{align}
within this period. We denote by $(u^n,\lambda^n):=(u_{\alpha^n},\lambda_{\alpha^n})$ the solution of \eqref{eq:magnetostatic} for the rotor position $\alpha^n$.
\begin{remark}\label{rem:diffusion}
    The differential operators $\widetilde{\cur},\cur$ introduced in Section~\ref{sec:notation} correspond to rotated versions of $\mathrm{div},\nabla$, respectively. For an isotropic material law, i.e. $h_\Omega(b_1,q)=h_\Omega(b_2,q)$ for all $b_1,b_2\in\R^2$ with $|b_1|=|b_2|$, problem \eqref{eq:initial_magnetostatic} corresponds to a nonlinear diffusion equation
    \begin{align*}
        \widetilde{\cur}h_\Omega(\cur u,q)=-\mathrm{div}h_\Omega(\nabla u,q),
    \end{align*}
    with suitable boundary conditions.
\end{remark}
\begin{remark}
    The formula for the torque \eqref{eq:torque} is derived in \citep{Egger_2022ab}. There are strong similarities to the widely used torque calculation method based on the Maxwell stress tensor \citep{Sadowski1992},
    \begin{align*}
        T=\int_S\frac{1}{\mu_0}B_rB_tr\ \mathrm{d}S,
    \end{align*} where $\mu_0$ is the magnetic vacuum permeability, $B_r$ and $B_t$ are the radial and normal component of the magnetic flux density $B$ and $S$ is a circle in the airgap. In our formula the Lagrange multiplier $\lambda$ represents the tangential component of the magnetic field $h\cdot t_{\Gamma_R}\sim \mu_0^{-1}B_t$ and $\cur u\cdot n_{\Gamma_R}=b\cdot n_{\Gamma_R}\sim B_r$. Similarly the domain of integration $\Gamma_R\subset D_\mathrm{airgap}$ is a circle in the airgap.
\end{remark}
\begin{remark}
    In \citep{Egger_2022ab} the authors propose to use harmonic basis functions to discretize the space of Lagrange multipliers $\mL$ which is also done in this work. However, to guarantee unique solvability one needs to fulfill a discrete inf-sup condition which results in a limitation of the degrees of freedom of the discrete space.
\end{remark}

\section{Methodology}
\label{sec: Methodology}
Although our theoretical considerations hold generally, we choose  a model problem to develop our methodology. 
\subsection{Model Optimization Problem}
Based on the forward problem introduced in Section~\ref{sec:Physical Model} we aim to find a design $\Omega$ which maximizes the average torque of the machine for some parameter $q$. This leads to the following PDE constrained optimization problem:
\begin{align}\label{eq:PROBLEM}
    \begin{aligned}
        &\min_\Omega-\frac{1}{N}\sum_{n=0}^{N-1}T(u^n,\lambda^n,\alpha^n) \\
        \text{ s.t. }e(\Omega,q,&\,\alpha^n,(u^n,\lambda^n))=0,\ n=0,...,N-1,
    \end{aligned}
\end{align}
where $T$ is the torque \eqref{eq:torque} and $e$ is the nonlinear magnetostatic PDE \eqref{eq:magnetostatic} for material configuration $\Omega$, parameter $q$ and rotor position $\alpha^n$. For a given design $\Omega$ and parameter $q$ we denote by $(u^n(\Omega,q),\lambda^n(\Omega,q))\in \mH\times\mL$ the unique solution of $e(\Omega,q,\alpha^n,(u,\lambda))=0$. The reduced cost functional of the parametrized design optimization problem is defined as 
\begin{align}\label{eq:reducedPROBLEM}
    \J(\Omega,q):=-\frac{1}{N}\sum_{n=0}^{N-1}T(u^n(\Omega,q),\lambda^n(\Omega,q),\alpha^n).
\end{align}
The Lagrangian of problem \eqref{eq:PROBLEM} is given by
\begin{align}
\label{eq:Lagrangian}
\begin{split}
    \mathcal{G}(\Omega,&q,(\alpha^0,...,\alpha^{N-1}),(u^0,...,u^{N-1},\lambda^0,...,\lambda^{N-1}),(p^0,...,p^{N-1},\eta^0,...,\eta^{N-1}))=\\&-\frac{1}{N}\sum_{n=0}^{N-1}T(u^n,\lambda^n,\alpha^n)+\sum_{n=0}^{N-1}\langle e(\Omega,q,\alpha^n,(u^n,\lambda^n)),(p^n,\eta^n)\rangle.
\end{split}
\end{align}
Since we will apply gradient based methodologies to solve the optimization problem \eqref{eq:PROBLEM}, we introduce the adjoint states $(p^n,\eta^n)\in\mH\times\mL$, which solve the adjoint equation, obtained by differentiation of the Lagrangian \eqref{eq:Lagrangian} with respect to the state $(u^n,\lambda^n)$,
\begin{align}
\begin{split}
\int_{D_\mathrm{all}}\partial_bh_\Omega(\cur u^n,q)\cur p^n\cdot\cur v^n\dx +& \langle\eta^n,(v^n|_{D_S}-v^n|_{D_R}\circ\rho_{-\alpha^n})\rangle_{\Gamma_R}\\&=\frac{r_{\Gamma_R}}{N}\langle\lambda^n,(\cur v^n\cdot n_{\Gamma_R})\circ\rho_{-\alpha^n}\rangle_{\Gamma_R}\\
\langle\mu^n,(p^n|_{D_S}-p^n|_{D_R}\circ\rho_{-\alpha^n})\rangle_{\Gamma_R} &=\frac{r_{\Gamma_R}}{N}\langle\mu,(\cur u^n\cdot n_{\Gamma_R})\circ\rho_{-\alpha^n}\rangle_{\Gamma_R}
    \end{split}
    \label{eq:adjoint}
\end{align}
for all $(v^n,\mu^n)\in\mH\times\mL, n=0,...,N-1$.
\subsection{Topology Optimization}
\label{sec:Topology Optimization}
 In this subsection we consider a fixed parameter $q$ and, for the sake of readability, skip it by writing $\J(\Omega)$ instead of $\J(\Omega,q).$ We denote the configuration of iron and air in the rotor by $\Omega=(\Omega_f,\Omega_a)\in\E,$ where $\Omega_f,\Omega_a$ are open and disjoint subsets of the design domain $D$ with $\overline{\Omega_f}\cup\overline{\Omega_a}=\overline{D}$. Here, $\E = E\times E$ denotes the set of admissible configurations where $E$ is the set of Lipschitz subsets of $D$ with a uniform Lipschitz constant $L_E$, which allows to prove existence of a minimizer, see also \citep{HenrotPierre2005, Gangl_2015aa}. We will use the notation $z\in\Omega :\Leftrightarrow z\in\Omega_f\cup\Omega_a$ to indicate that a point $z$ is in $D$ but not at the interface between $\Omega_f$ and $\Omega_a$. The sensitivity of $\J$ to circular topological perturbations $\omega_\epsilon(z):=\epsilon\omega+z$, see Figure \ref{fig:TDdef}, is given by the topological derivative. For perturbations of iron by including air the topological derivative is defined by
 \begin{figure}
     \centering
     \includegraphics[width=0.5\textwidth]{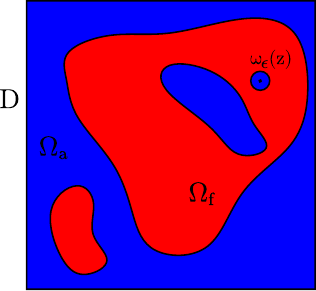}
     \caption{Some design domain $D$ consisting of two materials air in  $\Omega_a$ and iron in $\Omega_f$. Perturbation of $\Omega_f$ by putting air in $\omega_\epsilon(z)$.}
     \label{fig:TDdef}
 \end{figure}
\begin{align}
\label{eq:TDdef}
\frac{\id^{f\rightarrow a}}{\id\Omega}\J(\Omega)(z):=\lim_{\epsilon\searrow0}\frac{\J(\Omega_\epsilon(z))-\J(\Omega)}{|\omega_\epsilon(z)|}, \Omega_\epsilon(z)=(\Omega_f\setminus\overline{\omega_\epsilon(z)},\Omega_a\cup\overline{\omega_\epsilon(z)})
\end{align}
for $z \in \Omega_f$,
and for changes from air to iron in points $z\in\Omega_a$ by interchanging the role of iron (index $f$) and air (index $a$).  Combining these two, we get the topological derivative for $z\in\Omega$,
\begin{align}
    \frac{\id}{\id\Omega}\J(\Omega)(z)=\begin{cases}
        \frac{\id^{f\rightarrow a}}{\id\Omega}\J(\Omega)(z) &\text{ if }z\in\Omega_f,\\
        \frac{\id^{a\rightarrow f}}{\id\Omega}\J(\Omega)(z) &\text{ if }z\in\Omega_a.
    \end{cases}
\label{eq:GenTopologyDerv}
\end{align}
Based on this we will use the following term of optimality \citep{Amstutz_2006ab}: 
\begin{definition}
\label{def:Optimality}
    A configuration $\Omega\in\E$ is called locally optimal with respect to topological perturbations if 
    \begin{align}
        \frac{\id}{\id\Omega}\J(\Omega)(z)\ge 0 &\text{ for }z\in\Omega.
    \end{align}
\end{definition}
\subsubsection{Evaluation of the Topological Derivative}
The topological derivative for problems constrained by the magnetostatic PDE was first derived in \citep{Amstutz_2019aa}. To evaluate its formula in a systematic way we apply the framework of \citep{GANGLAutoTopo}. For a point $z\in\Omega$ we use the abbreviation $U^n_z=\cur u^n(z)$ and $P^n_z=\cur p^n(z)$ for point evaluation of the state $u^n$ coming from \eqref{eq:magnetostatic} and the adjoint state $p^n$ coming from \eqref{eq:adjoint}, respectively. First we introduce the first order asymptotic corrector of the state subject to topological perturbations $K_U\in BL(\R^2):=\{v\in L_2^{loc}(\R^2):\nabla v\in L_2(\R^2)^2\}/\R$ which is the solution of the the auxiliary exterior problem for $U\in\R^2$
\begin{align}
    \label{eq:exterior}
    \begin{aligned}
    \int_{\R^2}(h_\omega^{f\rightarrow a}(\xi, \cur K_U(\xi)+U)-&h_\omega^{f\rightarrow a}(\xi, U))\cdot\cur v(\xi)\ \id\xi \\
    &=-\int_\omega(h_a(U)-h_f(U))\cdot\cur v(\xi)\ \id\xi
    \end{aligned}
\end{align}
for all $v\in BL(\R^2)$ with  $\omega=B_1(0)$ and $ h_\omega^{f\rightarrow a}(\xi, b)=h_a(b)\chi_{\omega}(\xi)+ h_
f(b)\chi_{\R^2\setminus\overline{\omega}}(\xi)$. The topological derivative for material changes from iron to air, i.e. $z\in\Omega_f$, is then given by
\begin{align}
\label{eq:TDformula}
\begin{split}
    \frac{\id^{f\rightarrow a}}{\id\Omega}&\J(\Omega)(z)=\\&\sum_{n=0}^{N-1}\bigg(\frac{1}{|\omega|}\int_{\R^2}(h_\omega^{f\rightarrow a}(\xi, \cur K_{U_z^n}(\xi)+U_z^n)-h_\omega^{f\rightarrow a}(\xi, U_z^n)-\partial_bh_\omega^{f\rightarrow a}(\xi, U_z^n)\cur K_{U_z^n}(\xi))\cdot P_z^n\ \id\xi\\
    &+\frac{1}{|\omega|}\int_\omega(\partial_b h_a(U_z^n)-\partial_b h_f(U_z^n))\cur K_{U_z^n}(\xi)\cdot P_z^n\ \id\xi\\
    &+(h_a(U_z^n)-h_f(U_z^n))\cdot P_z^n\bigg),
\end{split}
\end{align}
where only the piecewise defined material law $h_\omega^{f\rightarrow a}$ and $K_{U_z^n}$ depend on $\xi$ and $U^n_z,P^n_z\in\R^2$ are constant. If the material laws are both linear of the form
\begin{align}
    h_f(b)=\nu_fb, h_a(b)=\nu_0b,
\end{align}
the exterior problem \eqref{eq:exterior} has an analytical solution
\begin{align}
    -\frac{\nu_0-\nu_f}{\nu_0+\nu_f}U\cdot\left(\xi\chi_\omega(\xi)+\frac{\xi}{|\xi|^2}\chi_{\R^2\setminus\overline{\omega}}(\xi)\right)
\end{align}
which leads to the topological derivative formula
\begin{align}
    \frac{\id^{f\rightarrow a}}{\id\Omega}&\J(\Omega)(z)=\sum_{n=0}^{N-1}2\nu_0\frac{\nu_0-\nu_f}{\nu_0+\nu_f}U_z^n\cdot P_z^n.
\end{align}
\begin{remark}
    For linear material laws the evaluation of the topological derivative is straightforward. In the nonlinear case one theoretically has to solve the exterior problem \eqref{eq:exterior} for every point $z\in\Omega$, rotor position $n$ depending on the current design $\Omega.$ However, since the topological derivative depends only on $U_z^n=\cur u^n(z)\in\R^2$ and $P_z^n=\cur p^n(z)\in\R^2, z\in\Omega, n=0,...,N-1$ it is convenient to take samples for all possible $U\in\R^2, P\in\R^2$ and to interpolate them. First we note, that formula \eqref{eq:TDevaluate} depends on $P_z^n$ in a linear way, therefore it is sufficient to consider only the base vectors of $\R^2$, $e_0,e_{\pi_2}$, for $P.$ Further it was shown in \citep{Amstutz_2019aa} that due to the isotropic characteristic of the material laws $h_f,h_a$ the topological derivative can be written as
    \begin{align}
        \frac{\id^{f\rightarrow a}}{\id\Omega}\J(\Omega)(z)=\sum_{n=0}^{N-1}\begin{pmatrix}
            \cos\beta_z^n&-\sin\beta_z^n\\\sin\beta_z^n&\cos\beta_z^n
        \end{pmatrix}\begin{pmatrix}
            f_1^{f\rightarrow a}(|U_z^n|)\\f_2^{f\rightarrow a}(|U_z^n|)
        \end{pmatrix}\cdot P_z^n, \beta_z^n=\arccos\frac{U_z^n\cdot e_0}{|U_z^n|}
        \label{eq:TDevaluate}
    \end{align}
    with
    \begin{align}\label{eq:TDsample}
    \begin{split}
        f_1^{f\rightarrow a}(t)=&\frac{1}{|\omega|}\int_{\R^2}(h_\omega^{f\rightarrow a}(\xi, \cur K_{te_0}(\xi)+te_0)-h_\omega^{f\rightarrow a}(\xi, te_0)-\partial_bh_\omega^{f\rightarrow a}(\xi, te_0)\cur K_{te_0}(\xi))\cdot e_0 \id\xi\\
    &+\frac{1}{|\omega|}\int_\omega(\partial_bh_a(te_0)-\partial_b h_f(te_0))\cur K_{te_0}(\xi)\cdot e_0\ \id\xi\\
    &+(h_a(te_0)-h_f(te_0))\cdot e_0,\\
    f_2^{f\rightarrow a}(t)=&\frac{1}{|\omega|}\int_{\R^2}(h_\omega^{f\rightarrow a}(\xi, \cur K_{te_0}+te_0)-h_\omega^{f\rightarrow a}(\xi, te_0)-\partial_bh_\omega^{f\rightarrow a}(\xi, te_0)\cur K_{te_0}(\xi))\cdot e_{\frac{\pi}{2}} \id\xi\\
    &+\frac{1}{|\omega|}\int_\omega(\partial_b h_a(te_0)-\partial_b h_f(te_0))\cur K_{te_0}(\xi)\cdot e_{\frac{\pi}{2}}\ \id\xi\\
    &+(h_a(te_0)-h_f(te_0))\cdot e_{\frac{\pi}{2}}.
    \end{split}
    \end{align}
It is therefore convenient to take samples $f_{1,k}^{f\rightarrow a}=f_{1}^{f\rightarrow a}(t_k), f_{2,k}^{f\rightarrow a}=f_2^{f\rightarrow a}(t_k)$ for $0=t_1<t_2<...<t_K=t_\mathrm{max}$ of formula \eqref{eq:TDsample} (which includes solving the exterior problem \eqref{eq:exterior} for $U=t_ke_0$) in an offline phase and providing an interpolation
\begin{align}
\tilde{f}_1^{f\rightarrow a}, \tilde{f}_2^{f\rightarrow a}.
    \label{eq:TDinterpolation}
\end{align}
which gets evaluated online when computing the topological derivative using~\eqref{eq:TDevaluate}.
\end{remark}
\begin{remark}
    The topological derivative from air to iron $\frac{\id^{a\rightarrow f}}{\id\Omega}$ can be computed by applying the same formula \eqref{eq:TDformula} but interchanging the role of the materials, i.e. interchanging $h_f$ and $h_a$. Note that this also requires the solution of the modified version of \eqref{eq:exterior}.
\end{remark}
\begin{remark}
    Note that, although we skipped the uncertainty $q$ in this subsection, all material laws may depend on the parameter by $h(b)=h(b,q)$, therefore also the topological derivative depends on $q$ including the precomputed functions $f_1^{f\rightarrow a}(t,q),f_2^{f\rightarrow a}(t,q),f_1^{a\rightarrow f}(t,q),f_2^{a\rightarrow f}(t,q)$.
\end{remark}

\subsubsection{Level Set Algorithm}
\label{sec:LevelSetAlg}
This methodology on how to update the design defined by a level set function using the topological derivative was introduced in \citep{Amstutz_2006ab}.
We represent the configuration $\Omega$ by a continuous level set function $\psi\in\mathcal{S}$
\begin{align}\label{eq:levelset}\begin{split}
    \psi(z)>0&\Leftrightarrow z\in\Omega_f\\
    \psi(z)=0&\Leftrightarrow z\in\overline{\Omega_f}\cap\overline{\Omega_a}\\
    \psi(z)<0&\Leftrightarrow z\in\Omega_a
\end{split}
\end{align}
with $\mathcal{S}\subset \left\{\psi\in C(D):\|\psi\|_{L_2(D)}=1\right\}$ the space of admissible level set functions. Next we introduce the generalized topological derivative
\begin{align}
    g_\Omega(z)=\begin{cases}
        \phantom{-}\frac{\id^{f\rightarrow a}}{\id\Omega}\J(\Omega)(z)&\text{ if }z\in\Omega_f\\
        -\frac{\id^{a\rightarrow f}}{\id\Omega}\J(\Omega)(z)&\text{ if }z\in\Omega_a.
    \end{cases}
    \label{eq:TDoriented}
\end{align}
This is motivated by the following lemma
\begin{Lemma}
\label{lemma}
    The configuration $(\Omega_f,\Omega_a)=\Omega_\psi$ described by a level set function $\psi\in\mathcal{S}$ is (locally) optimal with respect to topological perturbations if
    \begin{align}
    \label{eq:lemmacondition}
        \psi(z)=\frac{g_{\Omega_\psi}(z)}{\|g_{\Omega_\psi}\|_{L_2(D)}}
    \end{align}
    for all $z\in \Omega_\psi$.
\end{Lemma}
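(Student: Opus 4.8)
The plan is to show that condition \eqref{eq:lemmacondition} forces the generalized topological derivative $g_{\Omega_\psi}$ to have constant sign on each of the two phases, and that this sign is exactly the one required by Definition~\ref{def:Optimality}. First I would unpack the hypothesis: if $\psi = g_{\Omega_\psi}/\|g_{\Omega_\psi}\|_{L_2(D)}$ pointwise on $\Omega_\psi$, then since $\|g_{\Omega_\psi}\|_{L_2(D)} > 0$ (the degenerate case $g_{\Omega_\psi}\equiv 0$ being trivial, as then $\psi\equiv 0$ and the optimality condition holds vacuously or is handled separately), the sign of $\psi(z)$ agrees with the sign of $g_{\Omega_\psi}(z)$ for every $z\in\Omega_\psi$.

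Next I would use the level set sign convention \eqref{eq:levelset}. For $z\in\Omega_f$ we have $\psi(z)>0$, hence $g_{\Omega_\psi}(z)>0$; but by the definition \eqref{eq:TDoriented} of $g_\Omega$ on the iron phase, $g_{\Omega_\psi}(z) = \frac{\id^{f\rightarrow a}}{\id\Omega}\J(\Omega_\psi)(z)$, so $\frac{\id^{f\rightarrow a}}{\id\Omega}\J(\Omega_\psi)(z)>0$. Symmetrically, for $z\in\Omega_a$ we have $\psi(z)<0$, hence $g_{\Omega_\psi}(z)<0$, and since $g_{\Omega_\psi}(z) = -\frac{\id^{a\rightarrow f}}{\id\Omega}\J(\Omega_\psi)(z)$ on the air phase, we conclude $\frac{\id^{a\rightarrow f}}{\id\Omega}\J(\Omega_\psi)(z)>0$. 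Collecting these via the case definition \eqref{eq:GenTopologyDerv} of $\frac{\id}{\id\Omega}\J(\Omega)(z)$ gives $\frac{\id}{\id\Omega}\J(\Omega_\psi)(z)>0\ge 0$ for all $z\in\Omega_\psi$, which is precisely the condition in Definition~\ref{def:Optimality}. Hence $\Omega_\psi$ is (locally) optimal with respect to topological perturbations.

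I do not expect a genuine obstacle here — the lemma is essentially a bookkeeping statement that the orientation built into $g_\Omega$ in \eqref{eq:TDoriented} is exactly the one that makes the fixed-point condition \eqref{eq:lemmacondition} equivalent (in fact, slightly stronger, because of the strict inequality) to the optimality notion. The only point requiring a word of care is the boundary set $\overline{\Omega_f}\cap\overline{\Omega_a}$ where $\psi=0$: this is exactly the set of points excluded by the convention $z\in\Omega :\Leftrightarrow z\in\Omega_f\cup\Omega_a$, so it plays no role, and both the hypothesis \eqref{eq:lemmacondition} and the conclusion are quantified only over $z\in\Omega_\psi$. A secondary remark worth making is that the argument only yields a sufficient condition (``if''), matching the statement; one should not claim the converse, since $g_{\Omega_\psi}$ being nonnegative on $\Omega_f$ and nonpositive on $\Omega_a$ does not by itself force the precise normalization in \eqref{eq:lemmacondition}.
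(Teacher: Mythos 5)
Your argument is correct and coincides with the paper's own proof: both simply read off the sign of $g_{\Omega_\psi}$ from the sign of $\psi$ via the level set convention \eqref{eq:levelset} and the orientation in \eqref{eq:TDoriented}, obtaining strict positivity of the topological derivative on each phase and hence the condition of Definition~\ref{def:Optimality}. Your extra remarks on the degenerate case $g_{\Omega_\psi}\equiv 0$ and the interface set are fine but do not change the route.
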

\begin{proof}
    Let $z\in\Omega_f$. Then we conclude
    \begin{align*}
        z\in\Omega_f\Leftrightarrow0<\psi(z)=\frac{g_{\Omega_\psi}(z)}{\|g_{\Omega_\psi}\|_{L_2(D)}}\Leftrightarrow g_{\Omega_\psi}(z)>0\Leftrightarrow \frac{\id^{f\rightarrow a}}{\id\Omega}\J(\Omega)(z)>0.
    \end{align*}
    Similarly we get for $z\in\Omega_a$ 
        \begin{align*}
        z\in\Omega_a\Leftrightarrow0>\psi(z)=\frac{g_{\Omega_\psi}(z)}{\|g_{\Omega_\psi}\|_{L_2(D)}}\Leftrightarrow g_{\Omega_\psi}(z)<0\Leftrightarrow \frac{\id^{a\rightarrow f}}{\id\Omega}\J(\Omega)(z)>0.
    \end{align*}
    which yields the pointwise optimality condition of Definition \ref{def:Optimality}.
\end{proof}
Based on the optimality condition \eqref{eq:lemmacondition},  \citep{Amstutz_2006ab} suggests to use a fixed point iteration to update the level set function. In Algorithm~\ref{alg:Levelset} we present this algorithm adapted to the nonlinear magnetostatic problem including an offline phase for the precomputation of the topological derivative~\eqref{eq:TDsample}.
\begin{algorithm}
\caption{Level Set Algorithm for Topology Optimization by Topological Derivative.}
    \label{alg:Levelset}
    \underline{Offline phase}
    \begin{algorithmic}
        \State \textbf{Choose} samples $0=t_1<t_2<...<t_K=t_\mathrm{max}$
        \For {$k=1,...,K$}
        \State \textbf{Solve} exterior problem \eqref{eq:exterior} for $U=t_ke_0$
        \State \textbf{Evaluate} $f_{1,k}^{f\rightarrow a},f_{2,k}^{f\rightarrow a},f_{1,k}^{a\rightarrow f},f_{2,k}^{a\rightarrow f}$ \eqref{eq:TDsample}
        \EndFor
    \State \textbf{Compute} interpolation $\tilde{f}_{1}^{f\rightarrow a},\tilde{f}_{2}^{f\rightarrow a},\tilde{f}_{1}^{a\rightarrow f},\tilde{f}_2^{a\rightarrow f}$ \eqref{eq:TDinterpolation}
    \end{algorithmic}
    \underline{Online phase}
\begin{algorithmic}
    \State\textbf{Choose} $\psi_0\in\mathcal{S}, k=0,k_\mathrm{max}<\infty, \varepsilon>0, 0<s_\mathrm{min}<s_\mathrm{max}\le 1,\gamma\in(0,1), \delta\ge1$
    \State \textbf{Evaluate} $\J(\Omega_{\psi_0})$ (evaluate \eqref{eq:reducedPROBLEM} by solving state equation \eqref{eq:magnetostatic})
    \For{$k=0,...,k_\mathrm{max}$}
    \State \textbf{Compute} $g_{k}=g_{\Omega_{\psi_k}}$ (solve adjoint equation \eqref{eq:adjoint}, evaluate \eqref{eq:TDinterpolation},\eqref{eq:TDevaluate}, apply \eqref{eq:TDoriented})
    \If{$\theta_k=\arccos\frac{\left(\psi_k,g_{k}\right)_{L_2(D)}}{\|g_{k}\|_{L_2(D)}}<\varepsilon$}
    \State\textbf{break}
    \EndIf
    \State $s=s_\mathrm{max}$
    \While{$s> s_\mathrm{min}$}
    \State $\psi_{k+1}=\frac{1}{\sin\theta_k}\left(\sin((1-s)\theta_k)\psi_k+\sin(s\theta_k)\frac{g_{k}}{\|g_{k}\|_{L_2(D)}}\right)$
    \If {$\J(\Omega_{\psi_{k+1}})<\J(\Omega_{\psi_k})$} (evaluate \eqref{eq:reducedPROBLEM} by solving \eqref{eq:magnetostatic})
    \State{$s=\min\{s_\mathrm{max},\delta s\}$}
    \State\textbf{break}
    \Else
    \State $s=\max\{s_\mathrm{min},\gamma s\}$
    \EndIf
    \EndWhile
    \EndFor
\end{algorithmic}
\end{algorithm}
The condition \eqref{eq:lemmacondition} of Lemma \ref{lemma} is fulfilled if and only if the algorithm has reached a stationary point, i.e. $\psi^{k+1}=\psi^{k}$. By the line search we guarantee, that this stationary point is indeed a local minimizer of the optimization problem.
\begin{remark}
    Since the generalized topological derivative $g$ is not necessarily continuous, we need an additional step to ensure $\psi_k\in\mathcal{S}, k\ge 0.$ We do this by replacing $g$ by the solution $\tilde{g}$ of the PDE
    \begin{align*}
        -\varepsilon\Delta \tilde{g}+\tilde{g}&=g\text{ in }D\\
        \nabla \tilde{g}\cdot n&=0\text{ on }\partial D.
    \end{align*}
    If $D$ and $g$ are sufficiently regular we get $\tilde{g}\in H^{1+\delta}(D)\hookrightarrow C(D)$ since $D\subset\R^2$. This procedure is similar to sensitivity filtering in density based optimization, the smoothing parameter $\varepsilon$ is related to the minimal feature size, see e.g. \citep{Lazarov2011}. The condition $\|\psi_k\|_{L_2(D)}=1$ is fulfilled by construction of the update step as spherical linear interpolation.
\end{remark}
\subsection{Robust Optimization}
\label{sec:robust optimization}
Algorithm~\ref{alg:Levelset} is a powerful tool for finding locally optimal topologies under deterministic conditions. However, practical engineering designs - especially in the context of electrical machines - are rarely free from uncertainty. To ensure that solutions remain effective and reliable under such real-world conditions, it is important to incorporate uncertainty into the optimization process. To address these challenges, we now present a general framework for solving a robust optimization problem. As discussed in the introduction in Section~\ref{Introduction}, uncertainties can arise from both design variables \( x \) and model parameters \( q \). In practical applications, uncertainties in model parameters $q$ may arise from measurement errors, model approximations, or environmental conditions and were already introduced as parameter uncertainty. Similarly, uncertainties in design variables \( x \) can result from manufacturing tolerances. In optimization without considering uncertainties, the problem is often solved to a high numerical accuracy. However, the precise optimized design cannot be achieved in practice due to limitations in manufacturing and implementation. To handle these uncertainties, we define an uncertainty set $U=U_x\times U_q$ where $U_x$ contains the possible variations of the design variable $\delta x$ and $U_q$ all possible uncertain values of the parameters $q$ that are in a region around the nominal value $\hat{q} \in U_q$.
These sets can be defined using prior knowledge, statistical data, or uncertainty quantification and are fixed since we are using a deterministic approach. Choosing a reasonable region around the nominal value ensures that the robust solution performs well without being overly conservative.
\begin{remark}
A common choice for the uncertainty sets are closed ellipsoids:
    \begin{itemize}
    \item Uncertainty set for model parameters:
    \begin{equation*}
        U_{q} = \left\{ q \in \mathbb{R}^{n_q} \mid q = \hat{q} + R_q v_q: v_q\in\R^{n_q}, \ \|v_q\|_2 \leq 1 \right\},
    \end{equation*}
    where \( \hat{q} \in \mathbb{R}^{n_q} \) is the nominal parameter, and \( R_q \in \mathbb{R}^{n_q \times m_q} \) is a  matrix with \(\operatorname{rank}(R_q) = m_q \leq n_q\) that defines the shape and size of the ellipsoidal uncertainty region in the parameter space.
\vspace{0.5cm}
    \item Uncertainty set for design variables:
    \begin{equation*}
        U_{x} = \left\{ \delta x \in \mathbb{R}^{n_x} \mid \delta x = R_x v_x: v_x\in\R^{n_x}, \ \|v_x\|_2 \leq 1 \right\},
    \end{equation*}
    where \( R_x \in \mathbb{R}^{n_x \times m_x} \) is also a matrix with \(\operatorname{rank}(R_x) = m_x \leq n_x\) and characterizes the admissible design perturbations.
\end{itemize}
\end{remark}
We now derive the robust optimization formulation similar to \citep{Komann2024,Lass_2017aa} of the nominal problem
\begin{equation} \min_{x} g(x, q) \label{eq:NominalProblem} \end{equation}
with $g:\R^{n_x}\times\R^{n_q}\rightarrow\R$ a real valued function. Incorporating the uncertainties in the design variables $\delta x\in U_x$ and model parameters $q\in U_q$ yields the robust optimization problem
\begin{equation}
    \min_{x} \max_{(\delta x, q) \in U_x \times U_q} g(x + \delta x, q).
    \label{eq:RobustFormulation}
\end{equation}
Next we introduce the worst case function which is the value function of the inner maximization problem
\begin{equation}
    f(x) = \max_{(\delta x, q) \in U_x \times U_q} g(x + \delta x,  q).
    \label{eq:WorstCaseFunction}
\end{equation}
Using this, we can rewrite the robust optimization problem \eqref{eq:RobustFormulation} as
\begin{equation}
    \min_{x} f(x)
    \label{eq:OptimizationProblemRobust}
\end{equation}
\begin{remark}
    In our application $g$ is the reduced cost functional of a PDE constrained optimization problem as introduced in \eqref{eq:reducedPROBLEM}.
\end{remark}
\begin{remark}
Solving \eqref{eq:RobustFormulation} directly by brute force, i.e. sampling the uncertainty set, is computationally expensive and often intractable. A comprehensive overview of problems of the form \eqref{eq:RobustFormulation} can be found in \citep{Leyffer2020}. Some robust optimization problems can be related to bilevel optimization problems. The connections between robust optimization and bilevel optimization are discussed in detail in \citep{Schmidt2024}.  To obtain tractability for the inner maximization \((\delta x,q)\mapsto g(x+\delta x,q)\) in \eqref{eq:WorstCaseFunction}, $g$ is typically approximated using either a linear \citep{Diehl_2006aa} or quadratic Taylor expansion \citep{Lass_2017aa}, where  the approximated problem is then solved. However, while a linear approximation provides an analytical solution formula, the approximation quality suffers for highly nonlinear problems. A quadratic approximation offers higher accuracy and leads to a Trust Region problem, but requires up to third-order derivatives, which is very tedious in cases involving PDE constraints. 
\end{remark}
In this work, we directly address the inner maximization problem and apply a theorem  of Clarke on generalized gradients of value functions \citep[Thm. 2.1]{clarke1975generalized}, which under quite general assumptions requires only first-order derivatives \citep{Danskin1967,Bertsimas_2010aa}. A related approach is used in \citep{Bertsimas_2010aa}, where descent directions of the worst case functions are used. We rely on the following version of \citep[Thm. 2.1]{clarke1975generalized}, see also \citep[Thm. 1]{Komann2024}.

\begin{Theorem}
Let \( U_x \times U_q \subset \mathbb{R}^{n_x} \times \mathbb{R}^{n_q} \) be convex and compact, and let \( g: \mathbb{R}^{n_x} \times U_q \to \mathbb{R} \) be continuous and continuously differentiable with respect to \( x \). Then the following properties hold for \( f(x) \), as defined in \eqref{eq:WorstCaseFunction}:
\begin{enumerate}
    \item \( f(x) \) is locally Lipschitz continuous and directionally differentiable.
    \item If \( U^*(x) \) denotes the set of maximizers \( (\delta x^*, q^*) \) in \eqref{eq:WorstCaseFunction}, then Clarke's generalized gradient of \( f(x) \) is given by:
    \begin{equation}\label{eq:fderivative}
        \partial f(x) = \operatorname{conv}\left\{ \nabla_x g(x + \delta x^*, q^*) \mid 
        (\delta x^*, q^*) \in U^*(x) \right\}.
    \end{equation}
    \item If \( U^*(x) \) contains a single element, then \( f(x) \) is differentiable at \( x \), and \eqref{eq:fderivative} provides the classical gradient.
\end{enumerate}
\label{thm:classical}
\end{Theorem}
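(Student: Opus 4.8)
The plan is to recognize this as a standard consequence of Clarke's theory of generalized gradients of marginal (value) functions, specialized to the situation where the feasible set of the inner problem is fixed (independent of $x$) and the parameterized objective $g(\cdot\,,\cdot)$ is jointly continuous and $C^1$ in the decision variable $x$. I would cite \citep[Thm.~2.1]{clarke1975generalized} as the workhorse, and the task reduces to checking that its hypotheses are met and then transcribing its conclusions into the notation of \eqref{eq:WorstCaseFunction}.

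First I would establish item~1. Local Lipschitz continuity of $f$ follows from compactness of $U_x\times U_q$ together with local uniform Lipschitz continuity of $x\mapsto g(x+\delta x,q)$: on a bounded neighborhood $N$ of a fixed $x_0$, the set $\{x+\delta x : x\in N,\ \delta x\in U_x\}$ is compact, $\nabla_x g$ is continuous hence bounded there, so $g(\cdot+\delta x,q)$ is Lipschitz with a constant uniform in $(\delta x,q)$; taking the pointwise supremum over $(\delta x,q)$ preserves this Lipschitz constant, and the max is attained by continuity of $g$ and compactness. Directional differentiability is the content of Danskin's theorem \citep{Danskin1967}: for a direction $d$, one shows $f'(x;d)=\max_{(\delta x^*,q^*)\in U^*(x)}\nabla_x g(x+\delta x^*,q^*)^\top d$, using uniform convergence of difference quotients (via the mean value theorem and equicontinuity of $\nabla_x g$ on the relevant compact set) and upper semicontinuity of the maximizer map $U^*(\cdot)$, which itself follows from a standard Berge-type argument since $U_x\times U_q$ is fixed and compact and $g$ is continuous.

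Next, item~2. The inclusion $\operatorname{conv}\{\nabla_x g(x+\delta x^*,q^*):(\delta x^*,q^*)\in U^*(x)\}\subseteq\partial f(x)$ comes from the directional-derivative formula of item~1 together with the definition of the Clarke subdifferential: each $\nabla_x g(x+\delta x^*,q^*)$ is a subgradient because $f'(x;d)\ge \nabla_x g(x+\delta x^*,q^*)^\top d$ for all $d$, and $\partial f(x)$ is convex and closed, so it contains the convex hull. The reverse inclusion is the substantive part and is exactly where I expect the main obstacle to lie: one must show $\partial f(x)$ is \emph{no larger} than this convex hull. Here I would invoke Clarke's theorem directly rather than reprove it; the key structural fact it exploits is that, since the constraint set does not depend on $x$, the ``normal cone'' contributions that would otherwise enlarge the generalized gradient vanish, leaving only the gradients of the active branches. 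The upper semicontinuity of $U^*$ and continuity of $\nabla_x g$ ensure the right-hand set is compact and convex, matching $\partial f(x)$.

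Finally, item~3 is immediate: if $U^*(x)=\{(\delta x^*,q^*)\}$ is a singleton, then \eqref{eq:fderivative} gives $\partial f(x)=\{\nabla_x g(x+\delta x^*,q^*)\}$, a single point; a locally Lipschitz function whose Clarke subdifferential is a singleton at a point is (Gateaux, indeed strictly) differentiable there with that gradient, and the directional-derivative formula of item~1 confirms $f'(x;d)=\nabla_x g(x+\delta x^*,q^*)^\top d$ for every $d$, so the classical gradient coincides with \eqref{eq:fderivative}. The only real care needed throughout is the uniformity of estimates over the compact uncertainty set and the semicontinuity of the argmax map; everything else is bookkeeping on top of \citep[Thm.~2.1]{clarke1975generalized} and \citep{Danskin1967}.
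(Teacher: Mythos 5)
Your proposal is correct and takes essentially the same route as the paper: the paper's entire proof is the single citation to \citep[Thm.~2.1]{clarke1975generalized}, and you likewise lean on that theorem for the substantive reverse inclusion while filling in the (standard, correctly argued) verification of its hypotheses and the easier parts via Danskin-type arguments. Nothing further is needed.
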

\begin{proof}
The proof can be found in \cite[Thm. 2.1]{clarke1975generalized}.
\end{proof}
In summary, this approach has the advantage that only first-order derivatives are required, and with \eqref{eq:fderivative} we have a direct formula for computing the subgradient with respect to the worst case function. Compared to solving the robust problem \eqref{eq:RobustFormulation} directly, the additional cost is given by finding the worst case of the inner maximization problem, as the outer optimization is similar to the nominal problem \eqref{eq:NominalProblem}, but usually nonsmooth. So a crucial step is to determine a maximizer \( (\delta x^*, q^*) \in U^*(x) \) to evaluate $f(x)$ and $g\in\partial f(x)$. 

\begin{remark} A related approach can be found in \citep{Caubet2024}, where Clarke's subdifferential is used to solve robust shape optimization problems of the form \eqref{eq:OptimizationProblemRobust} with uncertain parameters. In contrast, we focus in this work on topology optimization using the topological derivative. \end{remark}

\section{ Robust Topology Optimization }
\label{sec: Combined Approach}
In this section we present a combination of the level set based topology optimization using the topological derivative from Section~\ref{sec:Topology Optimization} and the robust optimization considering the worst case from Section~\ref{sec:robust optimization}. Since the set of designs $\E$ is not a normed vector space, uncertainties on the design cannot be considered in a straightforward way by the presented theory. Thus, we restrict ourselves to uncertainties in the additional parameter $q\in U\subset\R^{n_q}$ and do not consider uncertainties in the optimization variable $\Omega\in\E$ (corresponding to $x\in\R^{n_x}$ in \eqref{eq:NominalProblem}). The extension to geometric uncertainties is left open for future research. Our goal is to solve the robust topology optimization problem
\begin{align}\label{eq:robustPROBLEM}
    \min_{\Omega\in\E}\max_{q\in U}\J(\Omega,q),
\end{align}
with $\J$ as defined in \eqref{eq:reducedPROBLEM}.
\subsection{Theoretical result}
For a functional $\J:\E\times U\rightarrow\R$ we define the worst case function with respect to $q$ by
\begin{align}
    \varphi(\Omega):=\max_{q\in U}\J(\Omega,q).
    \label{eq:WCfuncdef}
\end{align}
  In order to show the main theoretical result we need the following assumption.
 \begin{assumption}\label{ass:ass}
     Let $\J:\E\rightarrow\R$ be a real-valued shape function. We assume that there exists a $\tau_0>0$
     such that the mapping
     \begin{align}\label{eq:assumption}
         \tau\mapsto\frac{1}{|\omega|}\J(\Omega_{\sqrt{\tau}}(z))
     \end{align}
     is continuously differentiable for all $\tau\in[0,\tau_0]$ and all $z\in\Omega$ with $\Omega_\epsilon(z)$ and ${\omega_\epsilon(z)=\epsilon\omega+z}$, ${\omega=B_1(0)}$ as in \eqref{eq:TDdef}.
 \end{assumption}
 We now show that this assumption implies the existence of the topological derivative of $\Omega\mapsto\J(\Omega)$
 \begin{Lemma}\label{lem:gtilde}
     Let $\J:\E\times U\rightarrow\R$ be a real valued function. Assume that Assumption \ref{ass:ass} holds for $\J(\cdot,q)$, for all $q\in U$. Then, for $z\in\Omega$, the function
     \begin{align}\label{eq:gtilde}
    \tau\mapsto \tilde{g}(\tau,q)=\tfrac{1}{|\omega|}\J(\Omega_{\sqrt{\tau}}(z)
,q)
\end{align}
 is continuously differentiable with respect to $\tau$ and the derivative in $0$ coincides with the topological derivative
\begin{align}
    \frac{\partial^+}{\partial\tau}\tilde{g}(0,q)=\frac{\partial}{\partial\varOmega} \J(\Omega,q):=\lim_{\epsilon\searrow 0}\frac{\J(\Omega_\epsilon(z),q)-\J(\Omega,q)}{|\omega_\epsilon(z)|}
\end{align}
for all $q\in U$.
 \end{Lemma}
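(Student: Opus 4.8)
The plan is to unwind the definitions and reduce the claim to the single-parameter statement in Assumption~\ref{ass:ass}. Fix $z\in\Omega$ and $q\in U$. By hypothesis, Assumption~\ref{ass:ass} applies to the shape function $\J(\cdot,q)$, so the map $\tau\mapsto\tfrac{1}{|\omega|}\J(\Omega_{\sqrt{\tau}}(z),q)$ — which is exactly $\tilde g(\tau,q)$ as defined in \eqref{eq:gtilde} — is continuously differentiable on $[0,\tau_0]$. This already gives the first assertion of the lemma verbatim; the only real content is identifying the one-sided derivative $\tfrac{\partial^+}{\partial\tau}\tilde g(0,q)$ with the topological derivative.

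For the identification I would argue as follows. Write $\epsilon=\sqrt{\tau}$, so that $\tau\searrow0$ corresponds to $\epsilon\searrow0$, and note that $|\omega_\epsilon(z)|=\epsilon^2|\omega|=\tau|\omega|$ since $\omega=B_1(0)\subset\R^2$ and $\omega_\epsilon(z)=\epsilon\omega+z$. Then for $\tau>0$,
\begin{align*}
\frac{\tilde g(\tau,q)-\tilde g(0,q)}{\tau}
=\frac{\tfrac{1}{|\omega|}\J(\Omega_{\sqrt\tau}(z),q)-\tfrac{1}{|\omega|}\J(\Omega,q)}{\tau}
=\frac{\J(\Omega_\epsilon(z),q)-\J(\Omega,q)}{\tau|\omega|}
=\frac{\J(\Omega_\epsilon(z),q)-\J(\Omega,q)}{|\omega_\epsilon(z)|},
\end{align*}
using that $\Omega_{\sqrt\tau}(z)=\Omega_\epsilon(z)$ and $\tilde g(0,q)=\tfrac1{|\omega|}\J(\Omega,q)$ because $\omega_0(z)=\{z\}$ has measure zero so $\Omega_0(z)=\Omega$. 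Taking $\tau\searrow0$ on the left gives $\tfrac{\partial^+}{\partial\tau}\tilde g(0,q)$ by definition of the right derivative, and on the right gives exactly the limit defining $\tfrac{\partial}{\partial\varOmega}\J(\Omega,q)$; since the left-hand limit exists (by the continuous differentiability from Assumption~\ref{ass:ass}), the right-hand limit exists and the two coincide. This also shows in passing that the topological derivative $\tfrac{\partial}{\partial\varOmega}\J(\Omega,q)$ is well-defined, which is the secondary claim in the surrounding text.

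The argument is essentially bookkeeping: the substitution $\tau=\epsilon^2$ is precisely engineered so that the denominator $|\omega_\epsilon(z)|=\tau|\omega|$ becomes linear in $\tau$, turning the difference quotient defining the topological derivative into the difference quotient defining $\partial^+_\tau\tilde g(0,q)$. The only point requiring care — and the place I expect a careful reader to want detail — is that the one-sided derivative at $\tau=0$ genuinely equals $\lim_{\tau\searrow0}\bigl(\tilde g(\tau,q)-\tilde g(0,q)\bigr)/\tau$ and not merely the limit of $\tilde g'(\tau,q)$ as $\tau\searrow0$; these agree precisely because $\tilde g(\cdot,q)\in C^1([0,\tau_0])$, so no subtlety beyond invoking Assumption~\ref{ass:ass} is needed. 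I would close by remarking that all steps hold for every $q\in U$, which is all that is claimed.
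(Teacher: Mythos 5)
Your proposal is correct and follows essentially the same route as the paper's proof: invoke Assumption~\ref{ass:ass} for differentiability and then identify $\frac{\partial^+}{\partial\tau}\tilde g(0,q)$ with the topological derivative via the substitution $\tau=\epsilon^2$, using $|\omega_\epsilon(z)|=\epsilon^2|\omega|$. The additional bookkeeping you supply (that $\Omega_0(z)=\Omega$ and that the one-sided derivative coincides with the difference-quotient limit) is exactly what the paper leaves implicit.
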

 \begin{proof}
     Let $z\in\Omega,q\in U$ be arbitrary but fixed. The partial differentiability of $\tilde{g}$ follows by assumption. We conclude
     \begin{align}\label{eq:diffgtilde}
\frac{\partial^+}{\partial \tau}\tilde{g}(0,q)=\lim_{\delta\searrow 0}\frac{\tilde{g}(\delta,q)-\tilde{g}(0,q)}{\delta}\overset{\delta=\epsilon^2}{=}\lim_{\epsilon\searrow 0}\frac{\J(\Omega_\epsilon(z))-\J(\Omega)}{|\omega|\epsilon^2},
\end{align}
which gives the desired identification.
 \end{proof}
 We now prove that the topological derivative of the worst case objective function exists and can be written in a simple form using Theorem \ref{thm:classical} which is based on \citep[Thm 2.1]{clarke1975generalized}.
\begin{Theorem}
Let $U\subset\R^{n_q}$ be a convex compact set and
$\J:\E\times U\rightarrow\R$ be a real valued objective function. We assume that, for all $q\in U$, Assumption \ref{ass:ass} holds for $\J(\cdot,q)$. Furthermore we assume that the set of maximizers $U^*(\Omega)$ of the mapping $q\mapsto\J(\Omega,q)$ is a singleton and define the worst case parameter
\begin{align}
    q^*(\Omega):=\underset{q\in U}{\mathrm{arg\,max}}\, \J(\Omega,q),
    \label{eq:WCdef}
\end{align} for $\Omega\in\E$. Then the topological derivative of the worst case function \eqref{eq:WCfuncdef}, \( \frac{\mathrm{d}}{\mathrm{d}\varOmega}\varphi(\Omega)\),  exists for $z\in \Omega$ and is given by:
 \begin{equation}
 \frac{\mathrm{d}}{\mathrm{d}\varOmega}\varphi(\Omega)(z)=\frac{\partial}{\partial\varOmega} \J(\Omega,q^*(\Omega)).
        \label{eq:GeneralizedDerivative}
  \end{equation}
  \label{thm:robust}
\end{Theorem}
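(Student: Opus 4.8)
The plan is to reduce Theorem \ref{thm:robust} to a pointwise application of Theorem \ref{thm:classical}. Fix $z\in\Omega$ and set $\delta=\epsilon^2$. Consider the family of scalar functions $G(\delta,q):=\tilde g(\delta,q)=\tfrac{1}{|\omega|}\J(\Omega_{\sqrt\delta}(z),q)$ from Lemma \ref{lem:gtilde}. For each fixed $q\in U$, Assumption \ref{ass:ass} guarantees that $\delta\mapsto G(\delta,q)$ is $C^1$ on $[0,\tau_0]$; and since for $\delta=0$ we have $G(0,q)=\tfrac{1}{|\omega|}\J(\Omega,q)$ independent of the perturbation, the worst-case function $\varphi$ can be recovered via $\max_{q\in U}G(0,q)=\tfrac{1}{|\omega|}\varphi(\Omega)$. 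The idea is to differentiate the map $\delta\mapsto\max_{q\in U}G(\delta,q)$ at $\delta=0$: its one-sided derivative there, times $|\omega|$, will be exactly the topological derivative of $\varphi$ at $z$ by the same substitution $\delta=\epsilon^2$ used in \eqref{eq:diffgtilde}.

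First I would verify the hypotheses of Theorem \ref{thm:classical} in this reduced one-dimensional setting. The role of the design variable $x$ is played by the scalar $\delta\in[0,\tau_0]$ (or a small neighbourhood thereof, extending $G$ trivially), and the role of the uncertain parameter is $q\in U$, which is convex and compact by assumption. Continuity of $(\delta,q)\mapsto G(\delta,q)$ and continuous differentiability with respect to $\delta$ are exactly what Lemma \ref{lem:gtilde} / Assumption \ref{ass:ass} provide (uniformly enough in $q$ to invoke Clarke's theorem — this is the point that needs a little care, see below). Then Theorem \ref{thm:classical} tells us that $\Phi(\delta):=\max_{q\in U}G(\delta,q)$ is locally Lipschitz and directionally differentiable, and since we additionally assume $U^*(\Omega)=\{q^*(\Omega)\}$ is a singleton at $\delta=0$, part (3) of Theorem \ref{thm:classical} yields that $\Phi$ is differentiable at $\delta=0$ with
\begin{align}
    \frac{\partial^+}{\partial\delta}\Phi(0)=\partial_\delta G\big(0,q^*(\Omega)\big).
    \label{eq:chainstep}
\end{align}

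Next I would unwind the definitions. By Lemma \ref{lem:gtilde}, $\partial_\delta G(0,q)=\tfrac{\partial^+}{\partial\tau}\tilde g(0,q)=\tfrac{\partial}{\partial\varOmega}\J(\Omega,q)$, the topological derivative of $\J(\cdot,q)$ at $z$. On the other hand, using $\delta=\epsilon^2$ and $|\omega_\epsilon(z)|=|\omega|\epsilon^2$,
\begin{align}
    \frac{\partial^+}{\partial\delta}\Phi(0)
    =\lim_{\delta\searrow0}\frac{\Phi(\delta)-\Phi(0)}{\delta}
    =\lim_{\epsilon\searrow0}\frac{\varphi(\Omega_\epsilon(z))-\varphi(\Omega)}{|\omega|\epsilon^2}
    =\frac{1}{|\omega|}\,\frac{\mathrm d}{\mathrm d\varOmega}\varphi(\Omega)(z),
    \label{eq:topstep}
\end{align}
where the middle equality uses $\Phi(\delta)=\tfrac{1}{|\omega|}\max_{q\in U}\J(\Omega_{\sqrt\delta}(z),q)=\tfrac{1}{|\omega|}\varphi(\Omega_{\sqrt\delta}(z))$. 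Combining \eqref{eq:chainstep} and \eqref{eq:topstep} and cancelling $1/|\omega|$ gives $\tfrac{\mathrm d}{\mathrm d\varOmega}\varphi(\Omega)(z)=\tfrac{\partial}{\partial\varOmega}\J(\Omega,q^*(\Omega))$, which is \eqref{eq:GeneralizedDerivative}; in particular the limit defining the topological derivative of $\varphi$ exists.

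The main obstacle I anticipate is the interchange of the limit $\delta\searrow0$ with the maximum over $q$ — i.e.\ justifying that Theorem \ref{thm:classical} genuinely applies with $\delta$ in the role of $x$. Clarke's theorem needs joint continuity of $G$ on $[0,\tau_0]\times U$ and $C^1$-dependence on $\delta$; Assumption \ref{ass:ass} only asserts this for each \emph{fixed} $q$, so strictly one should either (i) argue that the construction of $\Omega_{\sqrt\delta}(z)$ makes the dependence on $(\delta,q)$ jointly continuous and the derivative $\partial_\delta G(\delta,q)$ continuous in both arguments, or (ii) note that for the singleton case one only needs directional differentiability of $\Phi$ at $0$, which follows from Danskin-type arguments using upper semicontinuity of $q\mapsto\partial_\delta G(0,q)$ and compactness of $U$. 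I would state the needed joint-regularity as implicitly contained in Assumption \ref{ass:ass} (or add a half-line strengthening it), and then the rest is the bookkeeping in \eqref{eq:chainstep}–\eqref{eq:topstep}.
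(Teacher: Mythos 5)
Your proposal is correct and follows essentially the same route as the paper: reduce to the scalar function $\tau\mapsto\tfrac{1}{|\omega|}\J(\Omega_{\sqrt{\tau}}(z),q)$, apply Theorem \ref{thm:classical} with $\tau$ in the role of $x$, and identify the one-sided derivative at $0$ with the topological derivative via the substitution $\tau=\epsilon^2$. The joint-regularity issue you flag (Assumption \ref{ass:ass} only gives $C^1$-dependence on $\tau$ for each fixed $q$, while Clarke's theorem asks for joint continuity) is real but is likewise left implicit in the paper's own proof, which simply invokes a ``partially continuously differentiable extension'' without further comment.
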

\begin{proof}
 By Lemma \ref{lem:gtilde}, the function $\tau\mapsto\tilde{g}(\tau,q)$, defined in \eqref{eq:gtilde}, is continuously differentiable on $[0,\tau_0]$ for $q\in U$. This yields the existence of a partially continuously differentiable extension ${g:\R\times U\rightarrow\R}$ with 
 \begin{align*}
     g(\tau,q)=\tilde{g}(\tau,q)=\frac{1}{|\omega|}\J(\Omega_{\sqrt{\tau}},q),
 \end{align*}
 for $\tau\in[0,\tau_0], q\in U$. For $g$ we define the worst case function $f:\R\rightarrow\R,$
\[
    f(\tau) := \max_{q\in U}g(\tau,q).
\]
Note, that for $\tau\in[0,\tau_0]$ the worst cases of $g$ and $\J$ coincide
\[q^*(\tau):=\underset{q\in U}{\mathrm{arg\,max}}\,g(\tau,q)=\underset{q\in U}{\mathrm{arg\,max}}\,\J(\Omega_{\sqrt{\tau}},q)=q^*(\Omega_{\sqrt{\tau}}).\] Similarly we can identify the worst case functions for $\tau\in[0,\tau_0]$
\begin{align*}
    f(\tau) = g(\tau,q^*(\tau))= \frac{1}{|\omega|}\J(\Omega_{\sqrt{\tau}},q^*(\Omega_{\sqrt{\tau}}))=\frac{1}{|\omega|}\varphi(\Omega_{\sqrt{\tau}}).
\end{align*}
We apply Theorem \ref{thm:classical} to the function $g:\R\times U\rightarrow\R$ and get that the derivative of \( f \) exists in $0$ and can be computed as follows:
\begin{align}\label{eq:worstcaseg}
    \frac{\mathrm{d}}{\mathrm{d} \tau}f(0)=\frac{\partial}{\partial \tau}g(0,q^*(0))=\frac{\partial^+}{\partial\tau}\tilde{g}(0,q^*(0)).
\end{align}
Plugging in the definition of $f$ we get
\begin{align*}
\frac{\id}{\id\Omega}\varphi(\Omega)(z)=\lim_{\epsilon\searrow 0}\frac{\varphi(\Omega_\epsilon(z))-\varphi(\Omega)}{|\omega_\epsilon(z)|}\overset{\tau=\epsilon^2}{=}\lim_{\tau\searrow0}\frac{f(\tau)-f(0)}{\tau}=\frac{\mathrm{d}}{\mathrm{d}\tau}f(0)
\end{align*}
which, together with \eqref{eq:worstcaseg} and \eqref{eq:diffgtilde}, gives the desired result.
\end{proof}
Theorem \ref{thm:robust} states that, under the given assumptions, the robust topological derivative, i.e. the topological derivative of the worst case function $\varphi$ is given by evaluating the topological derivative of the nominal problem at the worst case parameter $q^*(\Omega)$. 
This leads to a natural extension of the level set algorithm to robust topology optimization problems.
\begin{corollary}\label{cor}
    Let $\varphi(\Omega):=\max_{q\in U}\J(\Omega,q)$ bet the worst case function of the robust design optimization problem \eqref{eq:robustPROBLEM} with a linear PDE constraint, i.e. with a linear material law $h_f(b)=\nu_fb$ in \eqref{eq:magnetostatic}. Assume that the set of maximizers $U^*(\Omega):=\{q^*\in U:\J(\Omega,q^*)=\varphi(\Omega)\}$ is a singleton for all $\Omega\in \E$. Then $\varphi$ is topologically differentiable for $z\in\Omega$ with
    \begin{align*}
        \frac{\mathrm{d}}{\mathrm{d}\varOmega}\varphi(\Omega)(z)=\frac{\partial}{\partial\varOmega} \J(\Omega,q^*(\Omega)).
    \end{align*}
\end{corollary}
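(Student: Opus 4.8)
The plan is to derive the corollary from Theorem~\ref{thm:robust} as a special case. That theorem already gives the identity $\frac{\mathrm{d}}{\mathrm{d}\varOmega}\varphi(\Omega)(z)=\frac{\partial}{\partial\varOmega}\J(\Omega,q^*(\Omega))$ under three hypotheses: $U$ convex and compact, the inner maximizer set $U^*(\Omega)$ a singleton, and Assumption~\ref{ass:ass} valid for $\J(\cdot,q)$ for every $q\in U$. Convexity and compactness of $U$ are standing assumptions, and the singleton property is assumed in the corollary. Hence the entire content of the proof is to check that, when the PDE constraint \eqref{eq:magnetostatic} is linear with $h_f(b)=\nu_fb$ and $h_a(b)=\nu_0b$, Assumption~\ref{ass:ass} holds for $\J(\cdot,q)$ for all $q\in U$. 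Once this is done, Theorem~\ref{thm:robust} yields \eqref{eq:GeneralizedDerivative}, which is exactly the claim; if desired, the right-hand side can then be made explicit as $\sum_{n=0}^{N-1}2\nu_0\frac{\nu_0-\nu_f}{\nu_0+\nu_f}U_z^n\cdot P_z^n$ for $z\in\Omega_f$ and analogously, with $\nu_f$ and $\nu_0$ interchanged, for $z\in\Omega_a$.

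To verify Assumption~\ref{ass:ass} I have to show that, for fixed $z\in\Omega$ and $q\in U$, the scalar map $\tau\mapsto\frac{1}{|\omega|}\J(\Omega_{\sqrt\tau}(z),q)$ is continuously differentiable on some interval $[0,\tau_0]$ — which is genuinely more than the existence of the topological derivative at $\tau=0$. First I would exploit the explicit structure of the linear regime: inserting the disk $\omega_\epsilon(z)=B_\epsilon(z)$ of the opposite material perturbs each state $u^n$ into $u^n_\epsilon$, and the exterior corrector equation \eqref{eq:exterior} then has the analytic solution written out just before \eqref{eq:TDformula}. Using the standard polarization ansatz $u^n_\epsilon=u^n+\epsilon\,(\text{rescaled disk corrector})+r^n_\epsilon$ and inserting it into the reduced functional, the task reduces to controlling the remainders $r^n_\epsilon$. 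Two features of the present setting make this possible: the inclusion is a ball, so around $z$ the perturbation decomposes into multipole contributions whose coefficients depend smoothly on $\epsilon$ (the dipole term already accounting for the order-$\epsilon^2$ effect); and the torque $T(u^n_\epsilon,\lambda^n_\epsilon,\alpha^n)$ is a duality pairing on $\Gamma_R$, which lies at fixed positive distance from $z\in D$, so the observed quantity is a smooth functional of the far field of the perturbation. Therefore $\epsilon\mapsto\J(\Omega_\epsilon(z),q)$ depends smoothly on $\epsilon^2$ in a neighbourhood of $0$; with $\tau=\epsilon^2$ this gives the required $C^1$ regularity on $[0,\tau_0]$, and Lemma~\ref{lem:gtilde} identifies the right derivative at $\tau=0$ with the nominal topological derivative, consistently with \eqref{eq:TDformula}.

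Assumption~\ref{ass:ass} being then available for every $q\in U$, Theorem~\ref{thm:robust} applies verbatim and delivers \eqref{eq:GeneralizedDerivative} for the worst-case function $\varphi=\max_{q\in U}\J(\cdot,q)$, which is precisely the corollary. The main obstacle is the regularity step of the second paragraph: one must estimate the remainder of the topological expansion uniformly enough to obtain actual differentiability of $\tau\mapsto\J(\Omega_{\sqrt\tau}(z),q)$ on a whole interval, together with continuity of that derivative up to $\tau=0$, not merely the leading-order asymptotics. This is exactly where linearity is indispensable — it furnishes the explicit corrector of \eqref{eq:exterior} and the smooth dependence on $\epsilon^2$ — and it is the reason the corollary is stated only for linear material laws; the nonlinear case would instead require the finer asymptotic analysis that is hypothesized abstractly in Assumption~\ref{ass:ass}.
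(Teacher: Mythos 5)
Your overall architecture is the same as the paper's: the corollary is reduced to checking Assumption~\ref{ass:ass} for $\J(\cdot,q)$ in the linear regime and then invoking Theorem~\ref{thm:robust} (convexity/compactness of $U$ and the singleton hypothesis being already in place). That reduction is correct and is exactly how the paper's one-line proof is organized, with the verification of Assumption~\ref{ass:ass} delegated to Lemma~\ref{lem:app} in the appendix.

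The gap is in your verification of Assumption~\ref{ass:ass}, which is the entire mathematical content here. Your argument expands the perturbed state around $\epsilon=0$ (polarization ansatz plus multipole decomposition) and concludes that $\epsilon\mapsto\J(\Omega_\epsilon(z),q)$ ``depends smoothly on $\epsilon^2$ near $0$.'' But Assumption~\ref{ass:ass} demands $C^1$ regularity of $\tau\mapsto\frac{1}{|\omega|}\J(\Omega_{\sqrt\tau}(z),q)$ on a whole interval $[0,\tau_0]$: you need (i) differentiability at every interior point $\tau>0$, i.e.\ with respect to the radius of a disk of \emph{fixed positive} size, and (ii) continuity of that derivative down to $\tau=0$ where it must match the topological derivative. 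Your expansion centered at $\epsilon=0$ addresses neither: differentiability at $\tau>0$ is a shape-derivative statement about a finite inclusion, not a small-inclusion asymptotic, and the uniform remainder control needed for (ii) is precisely what you flag as ``the main obstacle'' without supplying it. The paper closes both points differently: for $\tau>0$ it identifies $\hat g'(\tau)=\id^S\J(\Omega_{\sqrt\tau}(z))(V^{\mathrm{rad}}_{\sqrt\tau}(z))\,\frac{1}{2|\omega|\sqrt\tau}$ by interpreting the change of radius as a deformation along a radial vector field and using shape differentiability of $\J$, and for the continuity at $\tau=0$ it invokes the identity $\frac{\id}{\id\Omega}\J(\Omega)(z)=\lim_{\epsilon\searrow0}\frac{1}{2|\omega|\epsilon}\id^S\J(\Omega_\epsilon(z))(V^{\mathrm{rad}}_\epsilon(z))$ from Novotny--Soko{\l}owski, valid for the linear diffusion reformulation of \eqref{eq:magnetostatic}. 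Your route could in principle be completed via a full Ammari--Kang type expansion with uniform remainders, but as written it does not establish the interval regularity that Theorem~\ref{thm:robust} requires, so the corollary does not yet follow.
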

\begin{proof}
    Assumption \ref{ass:ass} for $\J(\cdot,q)$ is shown in Lemma \ref{lem:app} in the Appendix. Theorem \ref{thm:robust} yields the desired result
\end{proof}
\begin{remark}
    For the more general case of a nonlinear relation $h_f=f(b)$ which is usually used in electric machine simulation, a result corresponding to Lemma \ref{lem:app} is currently not available in the literature. This is subject of ongoing and future research.
\end{remark}
\begin{remark}
    Of course assuming the uniqueness of the worst case $q^*(\Omega)$ is a strong assumption. However Theorem \ref{thm:classical} states that if the set of maximizers $U^*(x)$ is not a singleton one should continue with an element of the convex hull of derivatives for all elements of $U^*(x)$. In particular, taking the gradient evaluated in one element of $U^*(x)$ is valid. We will follow this approach in the numerical realization by choosing the element we obtain by numerically maximizing $q\mapsto\J(\Omega,q)$ using Algorithm \ref{alg:InnerProblem} as $q^*(\Omega)$.
\end{remark}

\subsection{Numerical Algorithm}
We define the generalized robust topological derivative for $z\in\Omega$ by
\begin{align}
    g_{\Omega,q}(z)=\begin{cases}
        \phantom{-}\frac{\id^{f\rightarrow a}}{\id\Omega}\J(\Omega,q^*(\Omega))(z)&\text{ if }z\in\Omega_f\\
        -\frac{\id^{a\rightarrow f}}{\id\Omega}\J(\Omega,q^*(\Omega))(z)&\text{ if }z\in\Omega_a.
    \end{cases}
    \label{eq:TDorientedRobust}
\end{align}
Adding the computation of the worst case $q^*$ to the level set algorithm Algorithm~\ref{alg:Levelset}, we arrive at the robust counterpart sketched in Algorithm~\ref{alg:RobustLevelset}.
\begin{algorithm}
    \caption{Parameter Optimization Algorithm of the Inner Problem.}
    \label{alg:InnerProblem}
    \begin{algorithmic}
    \State\textbf{Choose} $q^0\in U, l_\mathrm{max}<\infty, \varepsilon_\tau>0, 0<\tau_\mathrm{min}<\tau_\mathrm{max}<\infty,\gamma_\tau\in(0,1),\delta_\tau\ge1, \gamma\in\bigl(0,\tfrac12\bigr)$
    \State\textbf{Evaluate} $\J(\Omega,q^0)$ (evaluate \eqref{eq:reducedPROBLEM} by solving state equation \eqref{eq:magnetostatic})
    \For{$l=0,...,l_\mathrm{max}$}
    \State \textbf{Compute} $\nabla_q\J(\Omega,q^l)$ (solve adjoint equation \eqref{eq:adjoint}, evaluate \eqref{eq:gradientq})
    \State $\tau=\tau_\mathrm{max}$
    \While{$\tau>\tau_\mathrm{min}$}
    \State{$q^{l+1}=\mathcal{P}_U(q^l+\tau\nabla_q\J(\Omega,q^l))$  }
    \If {$\J(\Omega,q^{l+1})-\J(\Omega,q^l) \ge \gamma/\tau \|q^{l+1}-q^l\|^2 $} (solve \eqref{eq:magnetostatic}, evaluate \eqref{eq:reducedPROBLEM})
    \State{$\tau=\min\{\tau_\mathrm{max},\delta_\tau\tau\}$}
    \State\textbf{break}
    \Else
    \State $\tau=\max\{\tau_\mathrm{min},\gamma_\tau\tau\}$
    \EndIf
    \EndWhile
    \If{$\|q^{l+1}-q^{l}\|<\varepsilon_\tau$}
    \State\textbf{break}
    \EndIf
    \EndFor
    \end{algorithmic}
\end{algorithm}
\begin{algorithm}
\caption{Robust Topology Optimization Algorithm with Level Set and Topological Derivative.}
\label{alg:RobustLevelset}
    \underline{Offline phase}
    \begin{algorithmic}
        \State \textbf{Choose} samples $0=t_1<t_2<...<t_K=t_\mathrm{max}, q_1,...,q_L\in U$
        \For {$l=1,...,L$}
        \For {$k=1,...,K$}
        \State \textbf{Solve} exterior problem \eqref{eq:exterior} for $U=t_ke_0, q=q_l$
        \State \textbf{Evaluate} $f_{1,l,k}^{f\rightarrow a},f_{2,l,k}^{f\rightarrow a},f_{1,l,k}^{a\rightarrow f},f_{2,l,k}^{a\rightarrow f}$ considering $h=h(b,q_l)$\eqref{eq:TDsample}
        \EndFor
        \EndFor
    \State \textbf{Compute} interpolation $\tilde{f}_{1}^{f\rightarrow a},\tilde{f}_{2}^{f\rightarrow a},\tilde{f}_{1}^{a\rightarrow f},\tilde{f}_2^{a\rightarrow f}$ \eqref{eq:TDinterpolation}
    \end{algorithmic}
    \underline{Online phase}
\begin{algorithmic}
    \State\textbf{Choose} $\psi_0\in\mathcal{S}, k_\mathrm{max}<\infty,\varepsilon>0, 0<s_\mathrm{min}<s_\mathrm{max}\le 1, \gamma\in(0,1), \delta\ge1$
    \For{$k=0,...,k_\mathrm{max}$}
    \State \textbf{Find} $q^*_k$ by Algorithm \ref{alg:InnerProblem} for $\Omega=\Omega_{\psi_k}$
    \State \textbf{Compute} $g_{k}=g_{\Omega_{\psi_k},q^*_k}$ (solve adjoint equation \eqref{eq:adjoint}, evaluate \eqref{eq:TDevaluate}\eqref{eq:TDinterpolation} considering $q^*_k$, apply \eqref{eq:TDorientedRobust})
    \If{$\theta_k=\arccos\frac{\left(\psi_k,g_{k}\right)_{L_2(D)}}{\|g_{k}\|_{L_2(D)}}<\varepsilon$}
    \State\textbf{break}
    \EndIf
    \State $s=s_\mathrm{max}$
    \While{$s> s_\mathrm{min}$}
    \State $\psi_{k+1}=\frac{1}{\sin\theta_k}\left(\sin((1-s)\theta_k)\psi_k+\sin(s\theta_k)\frac{g_{k}}{\|g_{k}\|_{L_2(D)}}\right)$
    \State \textbf{Find} $q^*_{k+1}$ by Algorithm \ref{alg:InnerProblem} for $\Omega=\Omega_{\psi_{k+1}}$
    \If {$\J(\Omega_{\psi_{k+1}},q^*_{k+1})<\J(\Omega_{\psi_k},q^*_k)$} (evaluate \eqref{eq:reducedPROBLEM} by solving \eqref{eq:magnetostatic})
    \State{$s=\min\{s_\mathrm{max},\delta s\}$}
    \State\textbf{break}
    \Else
    \State $s=\max\{s_\mathrm{smin},\gamma s\}$
    \EndIf
    \EndWhile
    \EndFor
\end{algorithmic}
\end{algorithm}
To determine the worst case $q^*(\Omega)$ we solve the constrained maximization problem by a projected gradient method stated in Algorithm~\ref{alg:InnerProblem}. The operator $\mathcal{P}_U$ denotes the orthogonal projection onto the uncertainty set $U$ and the gradient $\nabla_q\J$ is computed by
\begin{align*}
    \nabla_q\J(\Omega,q)=\nabla_q\mathcal{G}(\Omega,q,(\alpha^1,...,\alpha^N),(u^1,...,u^n,\lambda^1,...,\lambda^n),(p^1,...,p^n,\eta^1,...,\eta^n))
\end{align*}
where $\mathcal{G}$ is the Lagrangian defined in \eqref{eq:Lagrangian}, $(u^n,\lambda^n)$ solves the state equation \eqref{eq:magnetostatic} and $(p^n,\eta^n)$ the adjoint equation \eqref{eq:adjoint} for $n=0,...,N-1$.
\begin{remark}
    Let us state this expression for the specific uncertainties which we are considering. Both are incorporated via the PDE constraint \eqref{eq:magnetostatic}. We consider uncertainties in the source current $j(\alpha,q)$ and the material law $h_\Omega(b,q)$. The gradient of $\J$ is then computed by
    \begin{align}
    \label{eq:gradientq}
        \nabla_q\J(\Omega,q)=\sum_{n=0}^{N-1}\int_{D_\mathrm{all}}\nabla_qh_\Omega(\cur u^n,q)\cdot\cur p^n\dx-\int_{D_\mathrm{all}}\nabla_qj(\alpha^n,q)p^n\dx.
    \end{align}
\end{remark}
\begin{remark}
In order to apply Theorem \ref{thm:robust} it is necessary that $q^*(\Omega)$ is indeed a global maximizer. We try to guarantee this by choosing the initial guess $q^0$ wisely, because the quality of the solution of the local maximization problems depends on the starting point $q^0$. If \( U = [u_1, u_2] \subset \mathbb{R} \) is an interval, we use \( q^0 \in \{u_1, u_2\} \), the boundary points of \( U \), as initial guesses for the inner maximization problem. This approach is motivated by the observation that promising candidates for the maximizer \( q^* \) for such physical problems often lie on the boundary of \( U \). However, care must be taken if \( U \subset \mathbb{R}^n \), as selecting an inappropriate boundary point may lead to suboptimal solutions and fail to identify the worst case. For further details we refer the interested reader to \citep{BenTal2021BeyondLO}.    
\end{remark}
\begin{remark}
    If the uncertain parameter $q$ affects the material law $h_\Omega(b,q)$ we have to precompute the topological derivative \eqref{eq:TDsample} also for samples $q_0, \dots, q_L\in U$. This is done in the offline phase of Algorithm~\ref{alg:RobustLevelset}. Depending on its dimension this can get computationally costly. However, since these precomputations are independent of each other one can do this in parallel. If $q$ acts only on $j$ this is not necessary and the loop over $l=0,...,L$ in the offline phase of Algorithm~\ref{alg:RobustLevelset} can be skipped.
\end{remark}
\begin{remark}
    Compared to the non-robust optimization algorithm we have to additionally solve the inner maximization problem which is computationally costly. The computational overhead depends on the initial guesses and the regularity with respect to the parameter. In Section~\ref{sec: Numerical Results} we present a comparison of computation time for nominal and robust optimization for our applications, see Table \ref{tab:time}.
\end{remark}

\section{Numerical Results}
\label{sec: Numerical Results}
In this section we present results of the robust design optimization problem \eqref{eq:robustPROBLEM} applied to an electric machine with uncertainty either in the application current or the material law. In the latter case we distinguish between a scalar parameter or a spatially distributed one. We compare design and worst case values with those obtained by a nominal optimization. An overview of the obtained performances is given in Table \ref{tab:comparison}. We want to point out that our approach is quite general and can be applied not only to electric machines, but to many other engineering problems. 
\subsection{Implementation and Parameters}
In our simulations, we consider \eqref{eq:magnetostatic} with the piecewise defined material law \eqref{eq:hpiecewise} based on the following nominal material laws 
\begin{align}\label{eq:bhcurve}
\begin{split}
    h_a(b)&=\nu_0b,\\
    h_{m_i}(b)&=\nu_m(b-b_Re_{\varphi_i}), i=1,2,\\
    h_f(b)&=\nu_0b+(\nu_f-\nu_0)\frac{K_f}{\sqrt[N_f]{K_f^{N_f}+|b|^{N_f}}}b
\end{split}
\end{align}
and a source current density
\begin{align}\label{eq:sourcecurrent}
\begin{split}
    j_A(\alpha)&=\hat{j}\sin(4\alpha+\phi_0)\\
    j_B(\alpha)&=\hat{j}\sin(4\alpha +\frac{2\pi}{3}+\phi_0)\\
    j_C(\alpha)&=\hat{j}\sin(4\alpha +\frac{4\pi}{3}+\phi_0)\\
\end{split}
\end{align}
with parameters given in Table \ref{tab:mat_params}.
\begin{table}
    \centering
    \begin{tabular}{c|c|c|c|c|c|c|c|c|c}
         $\nu_0$&$\nu_f$&$K_f$&$N_f$&$\nu_m$&$b_R$&$\varphi_1$&$\varphi_2$&$\hat{j}$&$\phi_0$\\\hline
         $\frac{10^7}{4\pi}$&200&2.2T&12&$\frac{\nu_0}{1.086}$&1.216T&$30^\circ$&$15^\circ$&$23.7\times10^6\tfrac{\mathrm{A}}{\mathrm{m}^2}$&$6^\circ$
    \end{tabular}
    \caption{Material parameters.}
    \label{tab:mat_params}
\end{table}
To solve the occurring PDEs \eqref{eq:magnetostatic},\eqref{eq:exterior} we used the open source finite element package NGSolve \citep{ngsolve}. Similarly as in \citep{Amstutz_2006ab} we used lowest order finite elements on a triangular mesh with 3995 nodes to solve the state \eqref{eq:magnetostatic} and adjoint equation \eqref{eq:adjoint} as well as to represent the level set function \eqref{eq:levelset}. The goal is to minimize  the negative average torque \eqref{eq:reducedPROBLEM} for $N=11$ rotor positions \eqref{eq:rotorpositions}. To compute the topological derivative \eqref{eq:TDformula}, which includes solving the auxiliary problem \eqref{eq:exterior}, we truncated the unbounded domain $\R^2$ to a ball with radius 128 and used lowest order finite elements on a triangular mesh with 61272 nodes. The nominal optimization was done using Algorithm \ref{alg:Levelset}, for the robust optimization we used Algorithms \ref{alg:InnerProblem}, \ref{alg:RobustLevelset}. The parameters in the level set algorithms were chosen as presented in Table \ref{tab:param_nominal}. The samples in the offline phase of Algorithm \ref{alg:Levelset} are computed for uniformly distributed $0=t_1<...<t_K=t_\mathrm{max}$. If the uncertainty $q$ enters the material law as considered in Section~\ref{subsec:bh}, we additionally need $q_1,...,q_L\in U$ in the offline phase of Algorithm \ref{alg:RobustLevelset}, which we also chose uniformly distributed.
\begin{table}
    \centering
    \begin{tabular}{c|c|c|c|c|c|c|c|c|c|c}
        $t_\mathrm{max}$ &$K$&$L$&$\varepsilon$&$s_\mathrm{min}$&$s_\mathrm{max}$&$\gamma,\gamma_\tau$&$\delta,\delta_\tau$&$\varepsilon_\tau$&$\tau_\mathrm{min}$&$\tau_\mathrm{max}$ \\\hline
         5& 50&10&$2^\circ$&0.05&1&0.5&1.5&$10^{-3}$&$10^{-3}$&1
    \end{tabular}
    \caption{Parameter values for Algorithm \ref{alg:Levelset}, \ref{alg:InnerProblem}, \ref{alg:RobustLevelset}.}
    \label{tab:param_nominal}
\end{table}

\subsection{Nominal Optimization}\label{subsec:NOM}
We apply Algorithm \ref{alg:Levelset} to find a solution to the optimization problem without uncertainty \eqref{eq:PROBLEM}. After 29 iterations, the level set algorithm converged and we obtained the design $\Omega_\mathrm{NOM}$ shown in Figure \ref{fig:nominal} with a negative average torque of $\J(\Omega_\mathrm{NOM})=-835\mathrm{Nm}$.
\begin{figure}
    \centering
    \includegraphics[width=0.7\textwidth,trim=20cm 7.3cm 20cm 7.3cm,clip]{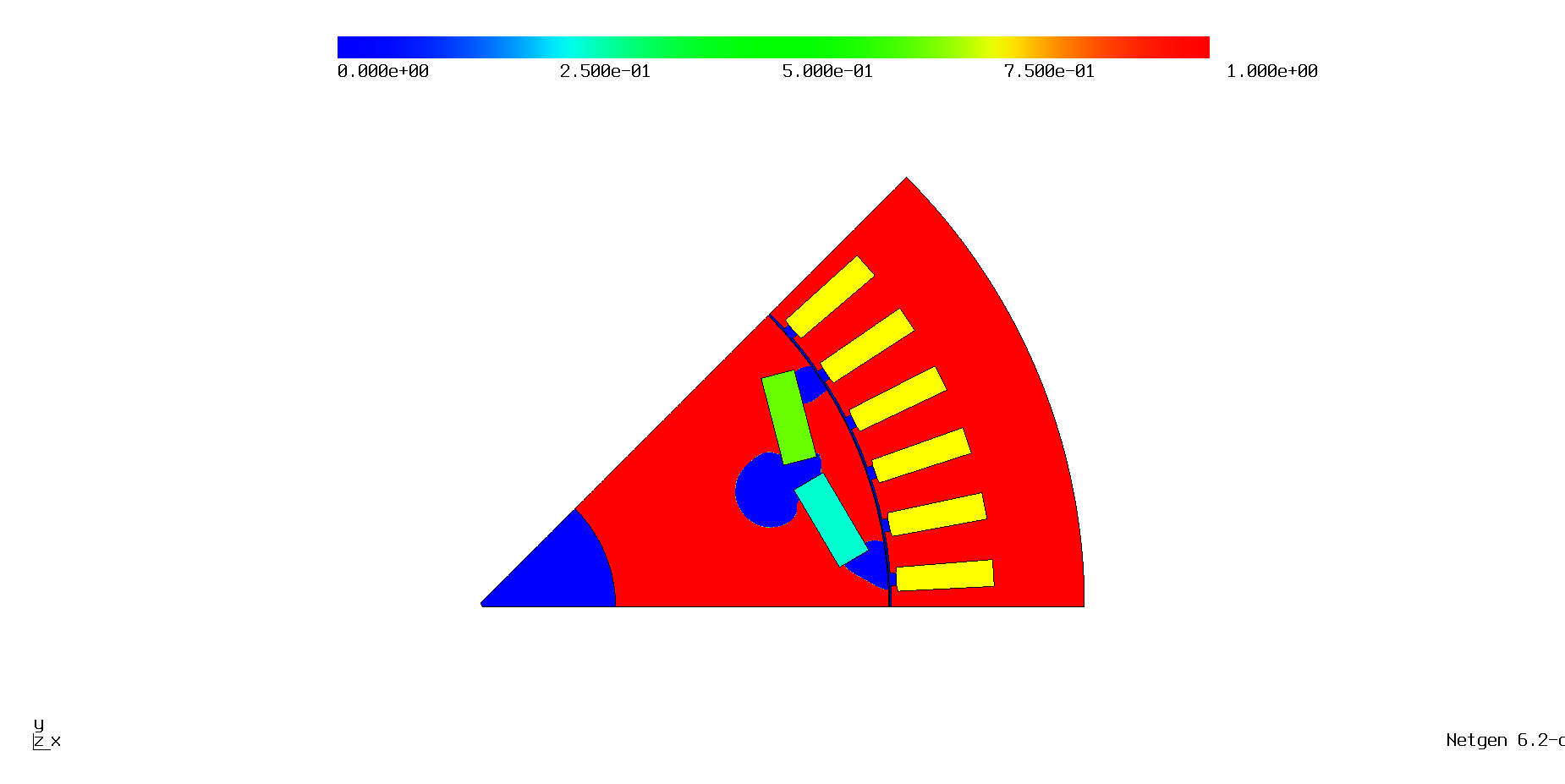}
    \caption{Final design of nominal optimization $\Omega_\mathrm{NOM}$, $\J(\Omega_\mathrm{NOM})=-835\textrm{Nm}$.}
    \label{fig:nominal}
\end{figure}
\subsection{Uncertain Load Angle}\label{subsec:ANG}
As a first application of the developed robust design optimization algorithm we add uncertainty to the source current density \eqref{eq:sourcecurrent}, which is the right hand side of the magnetostatic PDE \eqref{eq:magnetostatic},
\begin{align*}
    j_A(\alpha,q)&=\hat{j}\sin(4\alpha +q)\\
    j_B(\alpha,q)&=\hat{j}\sin(4\alpha +\frac{2\pi}{3}+q)\\
    j_C(\alpha,q)&=\hat{j}\sin(4\alpha +\frac{4\pi}{3}+q),\\
\end{align*}
where $q$ changes the phasing of the electric source current density with respect to the mechanical rotor position $\alpha$. This parameter is called the load angle. The uncertainty set is chosen as $U=[-9^\circ,21^\circ]$ with nominal value $\hat{q}=6^\circ$. 
\begin{figure}
    \centering
    \includegraphics[width=0.45\textwidth,trim=20cm 7.3cm 20cm 7.3cm,clip]{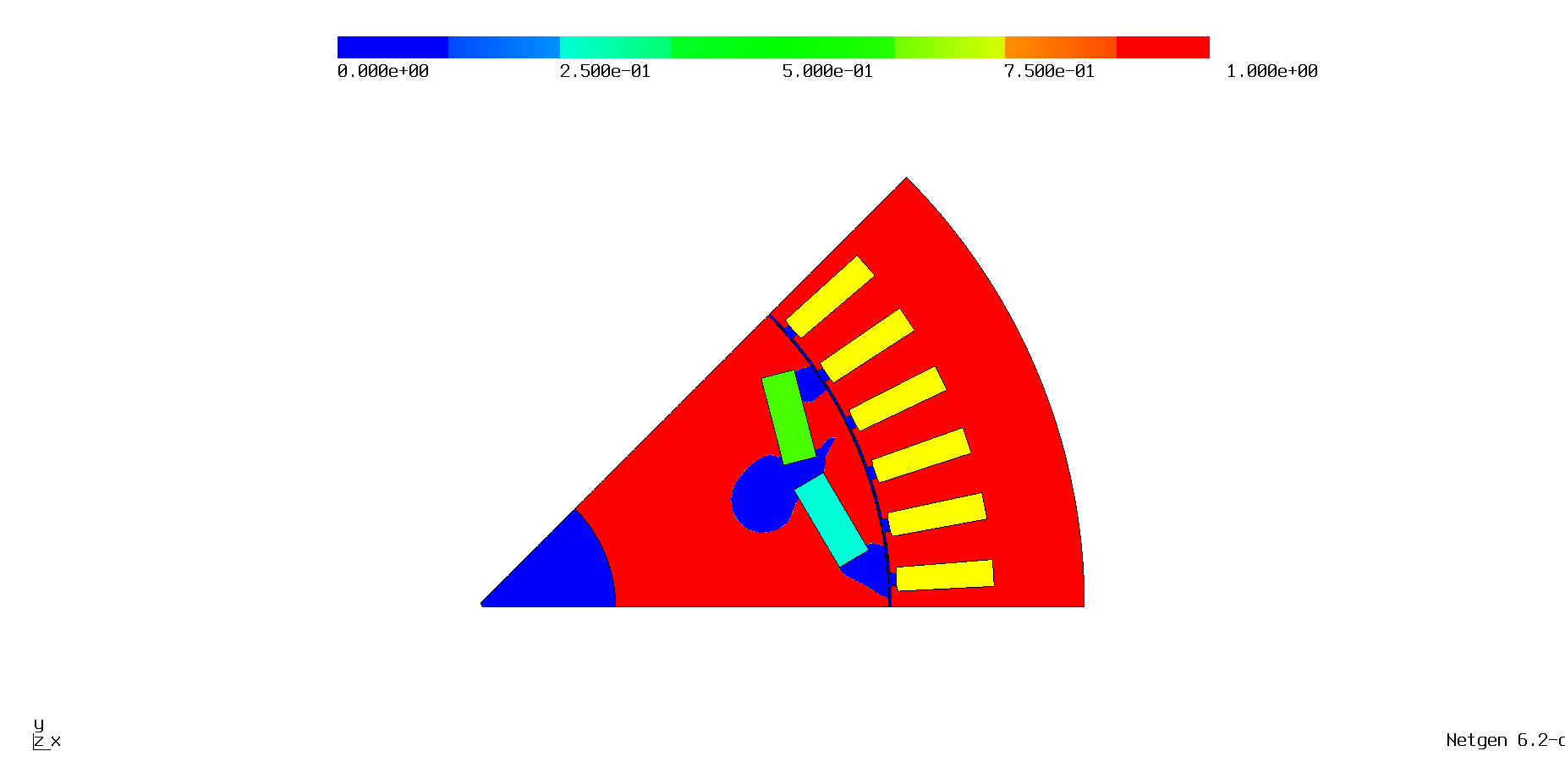}\quad
    \includegraphics[width=0.45\textwidth,trim=20cm 7.3cm 20cm 7.3cm,clip]{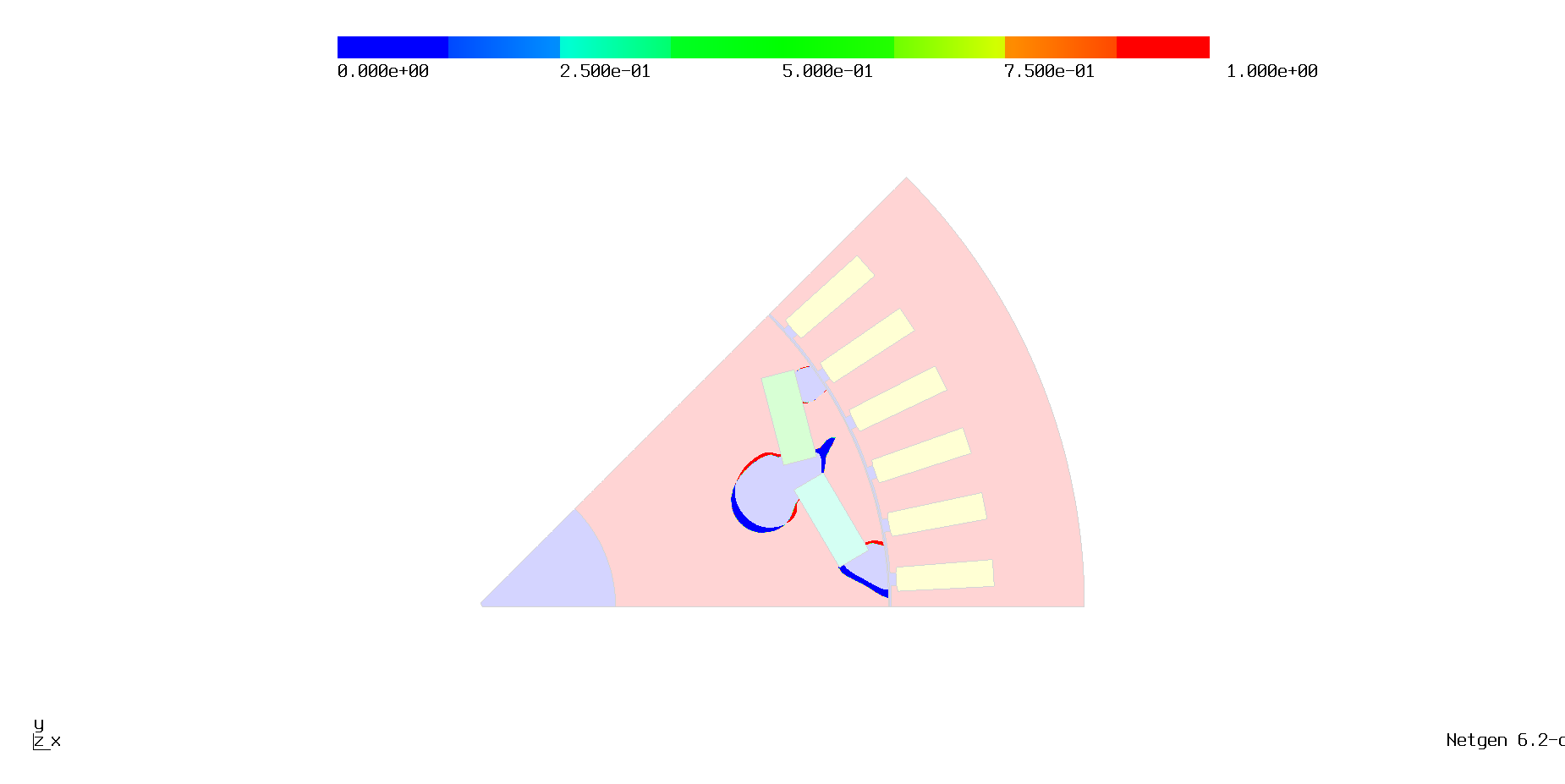}
    \caption{Final design $\Omega_\mathrm{ANG}$ (left), difference from nominal result $\Omega_\mathrm{NOM}$ (right) considering an uncertain load angle, see Section~\ref{subsec:ANG}.}
    \label{fig:design_angle}
\end{figure}
\begin{figure}
    \centering
    \includegraphics[width=0.7\textwidth]{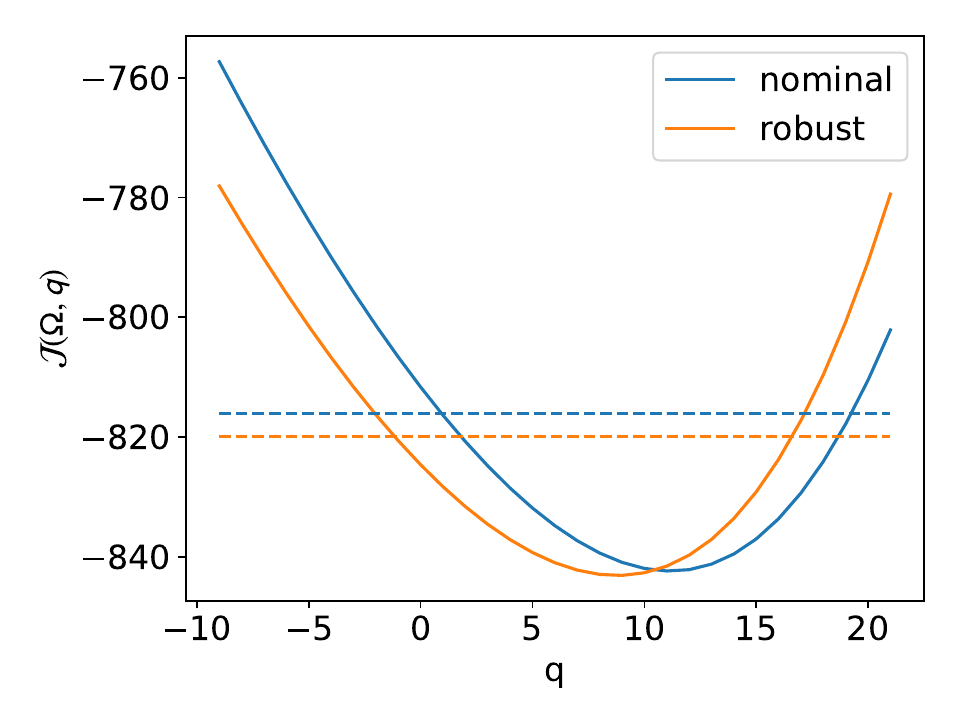}
    \caption{Negative average torque in dependence of $q\in U$ for nominally (blue) and robustly (orange) optimized design with corresponding averages over $U$ (dashed) as considered in Section~\ref{subsec:ANG}.}
    \label{fig:plot_angle}
\end{figure}

In Figure \ref{fig:design_angle} we present the robustly optimized design $\Omega_\mathrm{ANG}$ obtained after 76 iterations of Algorithm \ref{alg:RobustLevelset} and the difference to the nominally optimized one, i.e. with $\hat{q}=6^\circ$. The plot of the performance in dependence of $q\in U$ shows that the worst cases are attained at $q^*=-9^\circ$ for both designs with worst case values ${\J(\Omega_\mathrm{NOM},q^*(\Omega_\mathrm{NOM}))=-757\mathrm{Nm}},{\J(\Omega_\mathrm{ANG},q^*(\Omega_\mathrm{ANG}))=-778\mathrm{Nm}}$. This is an improvement of $3\%$. Also the nominal value is slightly improved from $\J(\Omega_\mathrm{NOM},\hat{q})=-835\mathrm{Nm}$ to $\J(\Omega_\mathrm{ANG},\hat{q})=-841\mathrm{Nm}$ which is $0.7\%$ as well as the average performance over the uncertainty set $U$ from $-816\mathrm{Nm}$ to $-820\mathrm{Nm}$. By considering the robustness, we obtained a better local minimizer also for the nominal case. This can be explained by a flattening of the objective, as illustrated in the introductory example Figure \ref{fig:RobustOptimizationPlot}.
\begin{remark}
    We mention that, in this case, the optimality criterion \eqref{eq:lemmacondition} was not reached. 
    In fact, the algorithm fell into a cyclic behavior of three designs after approximately 60 iterations. We stopped the algorithm after $k_{\mathrm{max}}=100$ iterations and selected the best performing design.
\end{remark}

\subsection{Uncertain Material Parameter}\label{subsec:bh}
We consider an uncertainty in the material law of iron. Using the model \eqref{eq:bhcurve} we model an uncertain saturation behavior by
\begin{align*}
    h_f(b,q)=\nu_0b+(\nu_f-\nu_0)\frac{q}{\sqrt[N_f]{q^{N_f}+|b|^{N_f}}}b,
\end{align*}
with $q\in U=[1.76,2.64]$ which corresponds to a $\pm20\%$ uncertainty of the nominal value $\hat{q}=K_f=2.2$, displayed in Figure \ref{fig:bh_curves}.
\begin{figure}
    \centering
    \includegraphics[width=0.7\textwidth]{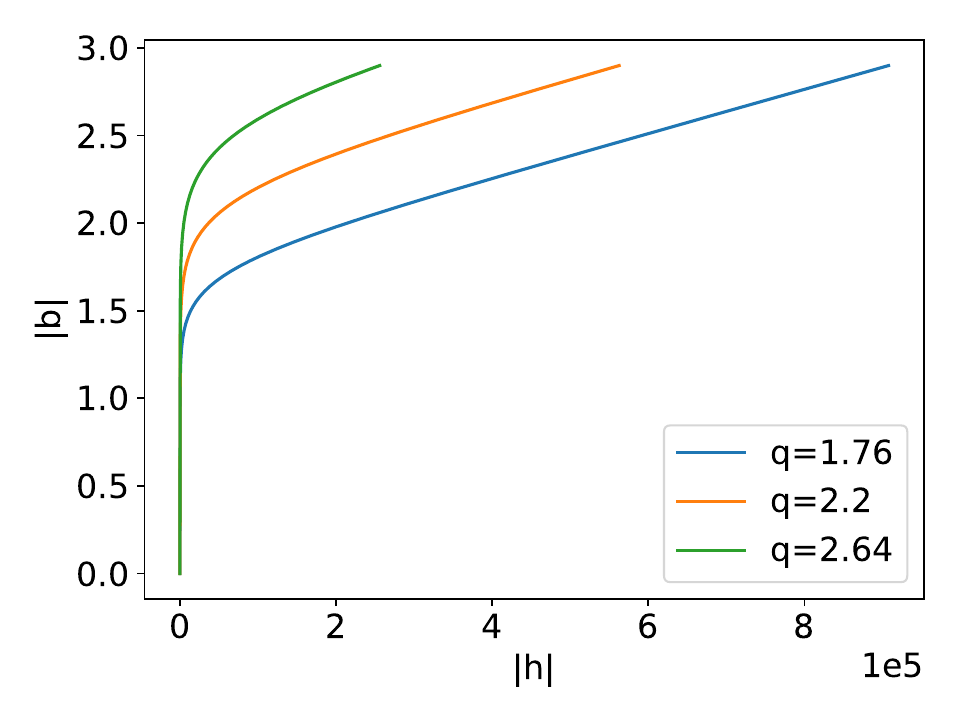}
    \caption{Iron material law for $q\in\{1.76,2.2,2.64\}.$}
    \label{fig:bh_curves}
\end{figure}
\subsubsection{Scalar Parameter}\label{ssubsec:SCAL}
\begin{figure}
    \centering
    \includegraphics[width=0.45\textwidth,trim=20cm 7.3cm 20cm 7.3cm,clip]{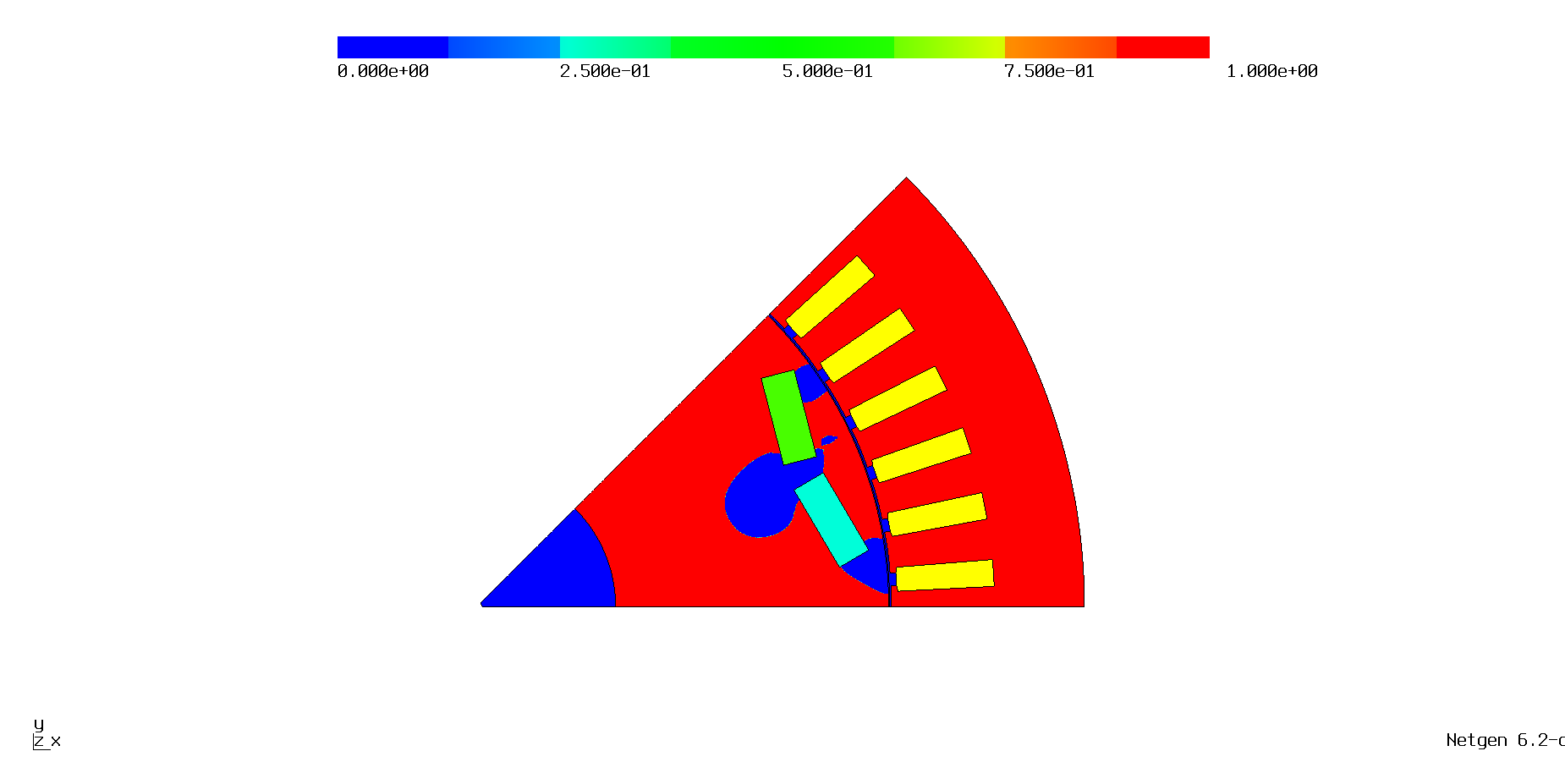}\quad
    \includegraphics[width=0.45\textwidth,trim=20cm 7.3cm 20cm 7.3cm,clip]{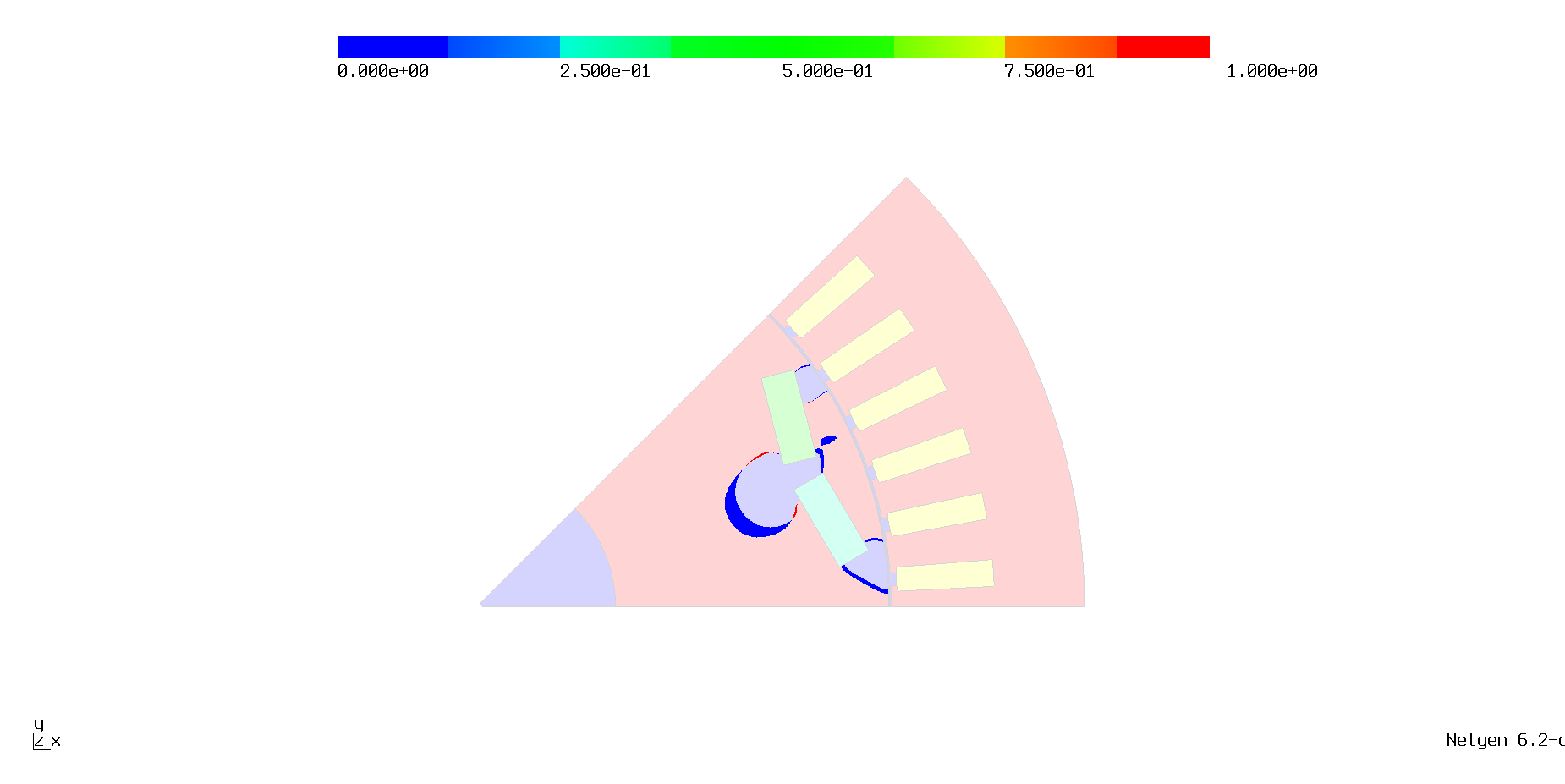}
    \caption{Final design $\Omega_\mathrm{SCAL}$ of robust optimization with uncertain scalar material parameter (left) and difference from the nominal result $\Omega_\mathrm{NOM}$ (right) considering a scalar material uncertainty, see Section~\ref{ssubsec:SCAL}.}
    \label{fig:design_bh_scalar}
\end{figure}
\begin{figure}
    \centering
    \includegraphics[width=0.7\textwidth]{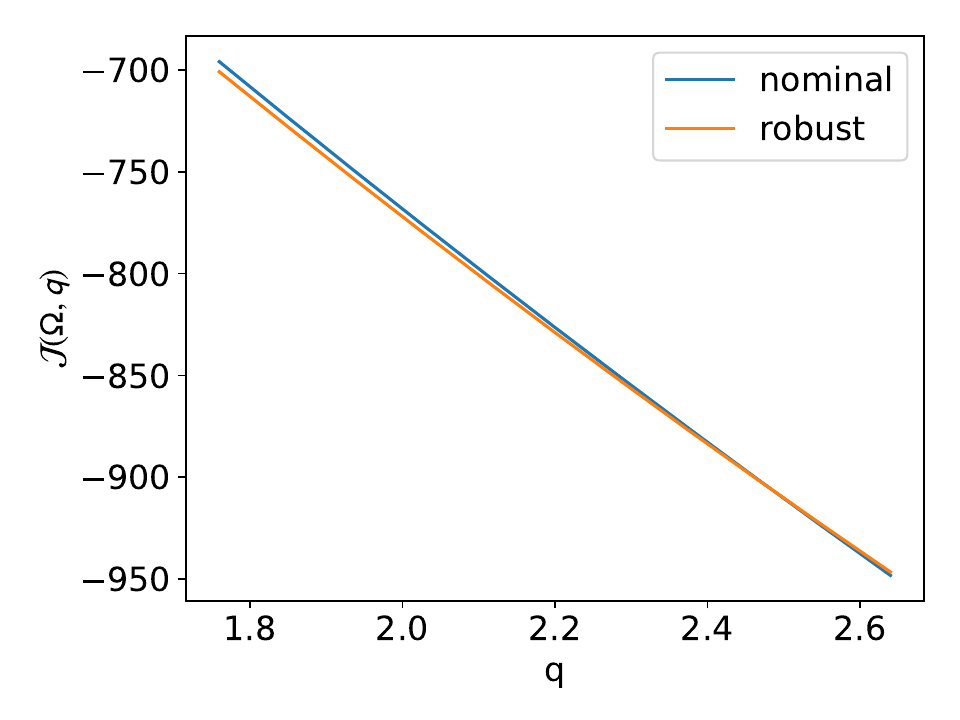}
    \caption{Negative average torque in dependence of $q\in U$ for nominally (blue) and robustly (orange) optimized design, as considered in Section~\ref{ssubsec:SCAL}.}
    \label{fig:plot_bh_scalar}
\end{figure}
First we assume that the uncertainty is homogeneous for all iron parts modeled. This corresponds to a scalar parameter $q\in U.$
The robust design optimization using Algorithm \ref{alg:RobustLevelset} converged after 30 iterations. The resulting design $\Omega_\mathrm{SCAL}$ is presented in Figure \ref{fig:design_bh_scalar}. The worst case value of the cost function decreases from $\J(\Omega_\mathrm{NOM},q^*(\Omega_\mathrm{NOM}))=-696\mathrm{Nm}$ to $\J(\Omega_\mathrm{SCAL},q^*(\Omega_\mathrm{SCAL}))=-701\mathrm{Nm}$ corresponding to an improvement by $0.7\%$. In Figure \ref{fig:plot_bh_scalar} one can see that the worst case parameter is $q^*=1.76$ for both the nominally and the robustly optimized design. The value at the nominal parameter $\hat{q}=2.2$ is slightly improved from $\J(\Omega_\mathrm{NOM},\hat{q})=-826\mathrm{Nm}$ to $\J(\Omega_\mathrm{SCAL},\hat{q})=-829\mathrm{Nm}$. 
\subsubsection{Distributed Parameter}\label{ssubsec:DIST}
We now assume that the uncertainty of the iron material behavior varies spatially. We model this by a parameter function $q:D_\mathrm{all}\rightarrow U$ with uncertain values in $U$. The robustly optimized design $\Omega_\mathrm{DIST}$ is shown in Figure \ref{fig:design_dist} (left) together with the difference from the nominally optimized design $\Omega_\mathrm{NOM}$(right). Here, Algorithm \ref{alg:RobustLevelset} converged after 23 iterations. The worst case function values give a small improvement from $\J(\Omega_\mathrm{NOM},q^*(\Omega_\mathrm{NOM}))=-692\mathrm{Nm}$ to $\J(\Omega_\mathrm{DIST},q^*(\Omega_\mathrm{DIST}))=-696\mathrm{Nm}$. In Figure \ref{fig:param_dist} we display the spatial distribution of the worst case parameter functions $q^*(\Omega_\mathrm{DIST})$, $q^*(\Omega_\mathrm{NOM})$ for the robustly and nominally optimized designs, respectively, which are barely different. We observe that $q^*$ attains only values on the boundaries of $U=[1.76,2.64]$. Since the lower value $1.76$ highly dominates, which was also the worst case in the previous optimization in Section~\ref{ssubsec:SCAL}, it is feasible that the resulting designs and torque values are very close to each other, see Figure \ref{fig:design_bh_scalar} and Figure \ref{fig:design_dist}. In Table \ref{tab:comparison} we compare the worst case function values for the nominally and robustly optimized designs. 
\begin{remark}
    In Figure \ref{fig:param_dist} we observe a very homogeneous worst case parameter distribution in the stator $D_S$ compared to the rotor $D_R$. One can give an engineering explanation for this phenomenon: The occurring magnetic fields, coming from the permanent magnets and the excitation current, move with the same speed as the rotor. Therefore the rotor has an almost static field with small oscillations. On the other hand, the pointwise temporal average of the field in the stator is zero. Since we consider the average torque, a possible spatial variation of the uncertainty in the stator is canceled. 
\end{remark}
\begin{remark}
    When computing the topological derivative $\frac{\id^{a\rightarrow f}}{\id\Omega}\J(\Omega,q)(z)$ for $z\in\Omega_a$ one has to assign a value for $q(z)$ in the iron perturbation. This issue is often encountered in combined design and parameter optimization. The presence of air hides the effect of $q$ in $\Omega_a$. In this case we decided to set $q^*$ to the nominal value  $q^*=q^*\chi_{\Omega_f}+2.2\chi_{\Omega_a}$.
\end{remark}
\begin{remark}
    Since the spatially distributed uncertainty results in an inhomogeneous material behavior, we have to be careful when using symmetries of the machine geometry. Since every pole (in our case a piece of $45^\circ$) has the same contribution to the torque it is feasible that the worst case $q^*$ is also the same for each pole. However, to get the correct average torque we have to simulate a mechanical rotation over a full pole (different to the previous cases, where it was sufficient to consider a third of a pole, see \eqref{eq:rotorpositions}). We did so by taking $N=33$ uniform angular positions $\alpha^0,...,\alpha^{N-1}$ within $45^\circ.$
\end{remark}
\begin{figure}
    \centering
    \includegraphics[width=0.45\textwidth,trim=20cm 7.3cm 20cm 7.3cm,clip]{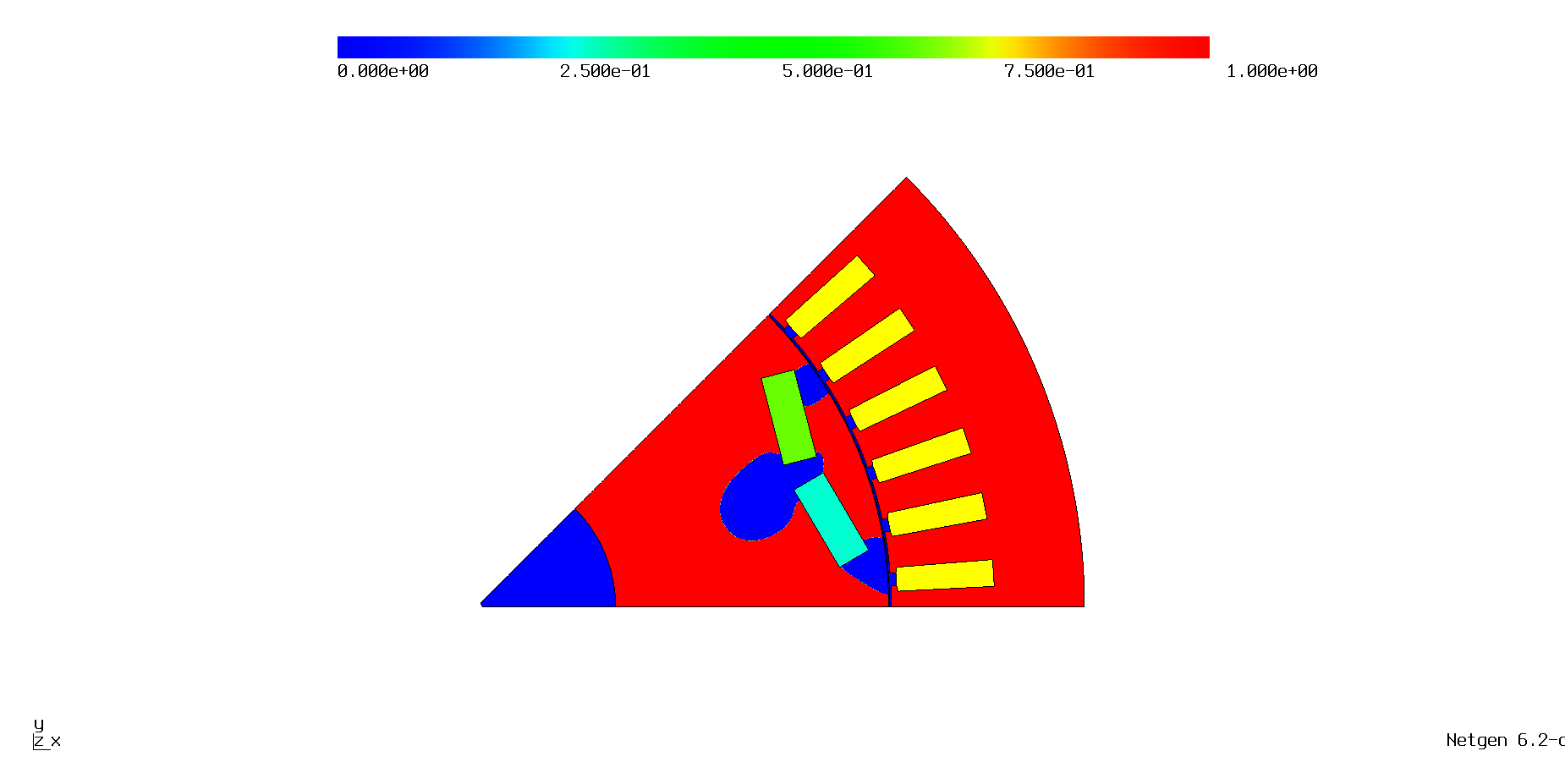}\quad\includegraphics[width=0.45\textwidth,trim=20cm 7.3cm 20cm 7.3cm,clip]{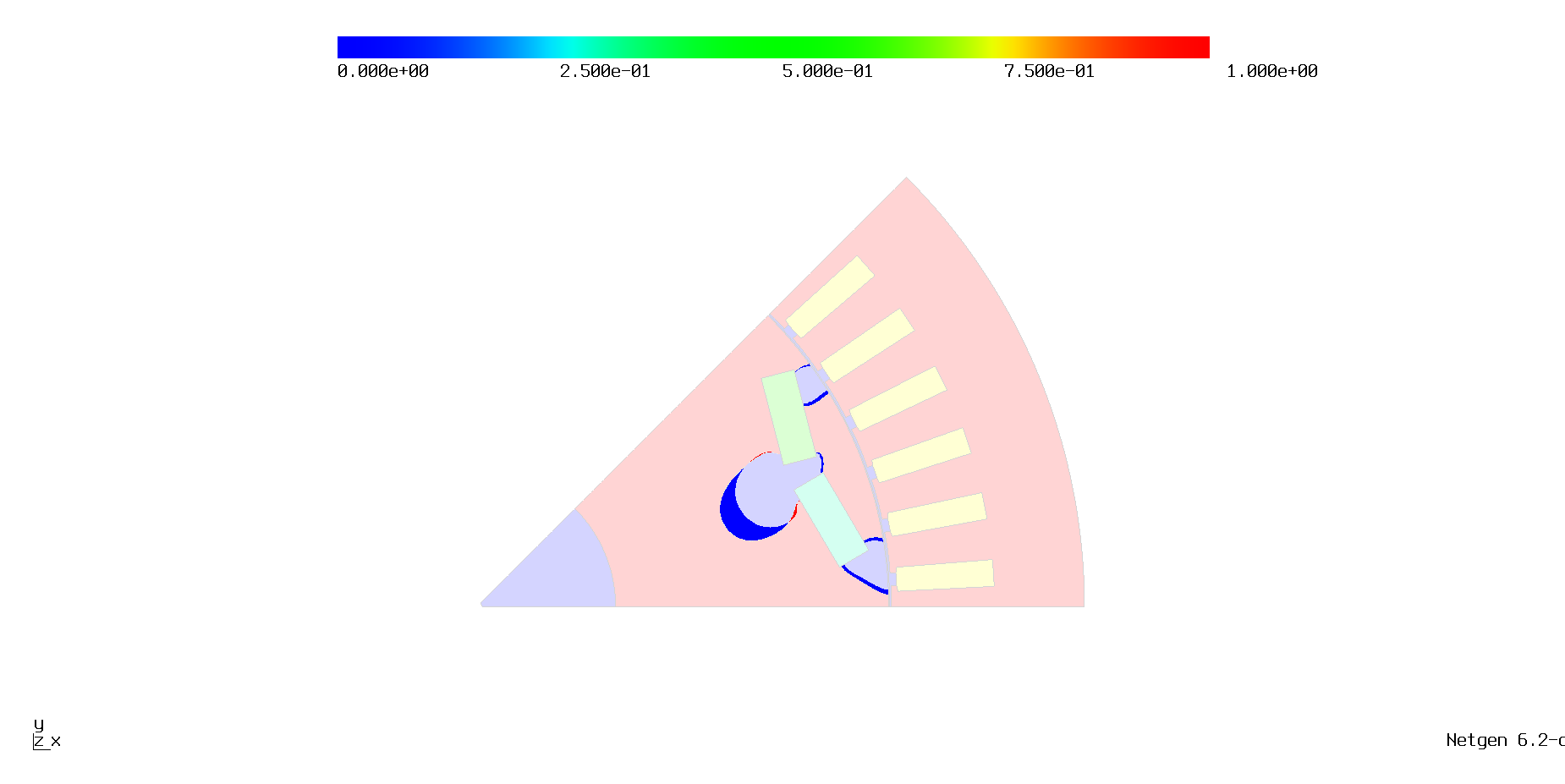}
    \caption{Final design $\Omega_\mathrm{DIST}$ for robust optimization (left), difference from the nominal result $\Omega_\mathrm{NOM}$ (right) considering a distributed material uncertainty, see Section~\ref{ssubsec:DIST}.}
    \label{fig:design_dist}
\end{figure}
\begin{figure}
    \centering
    \includegraphics[width=0.4\textwidth,trim=20cm 7.3cm 20cm 7.3cm,clip]{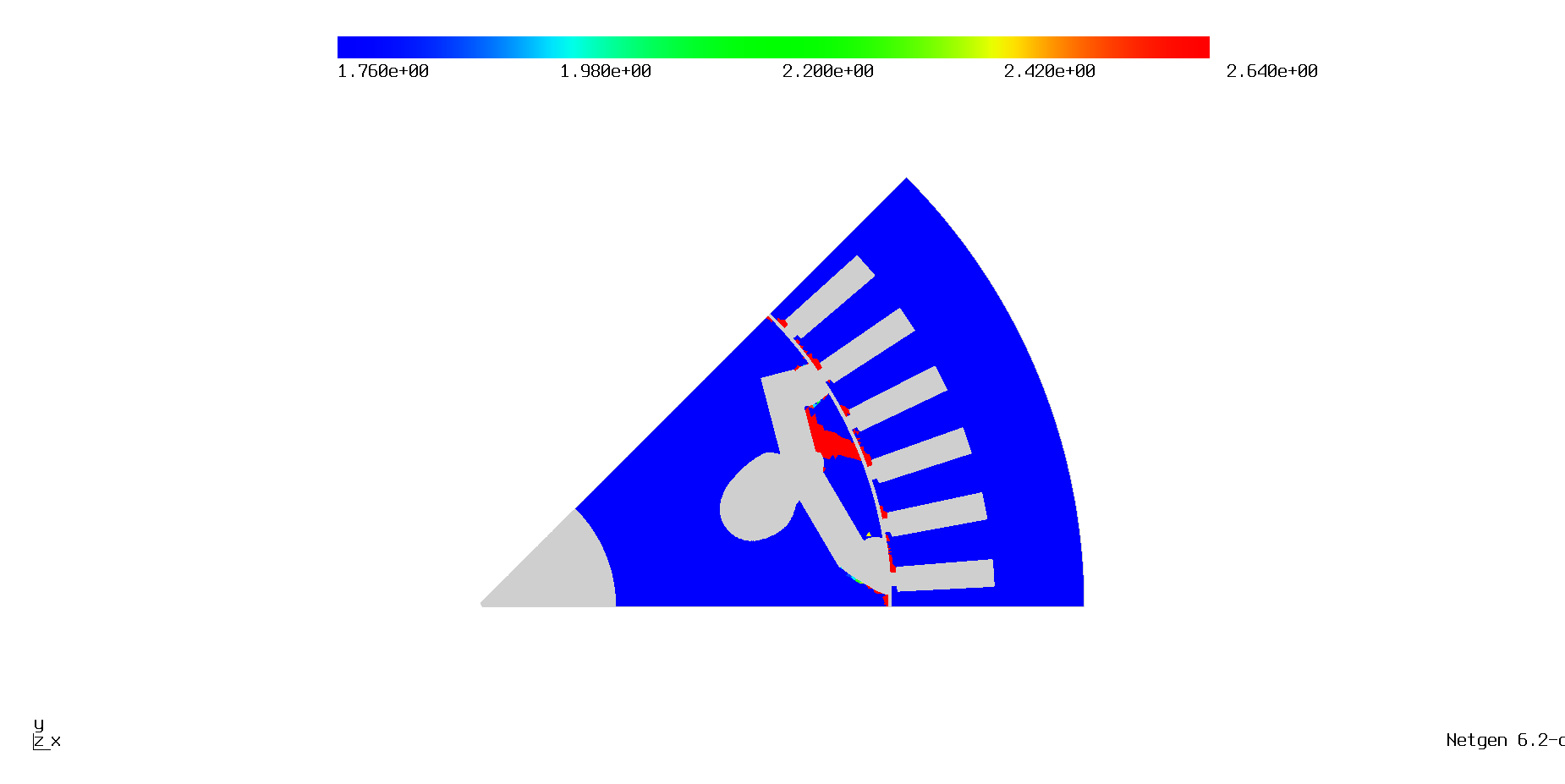}\quad\includegraphics[width=0.4\textwidth,trim=20cm 7.3cm 20cm 7.3cm,clip]{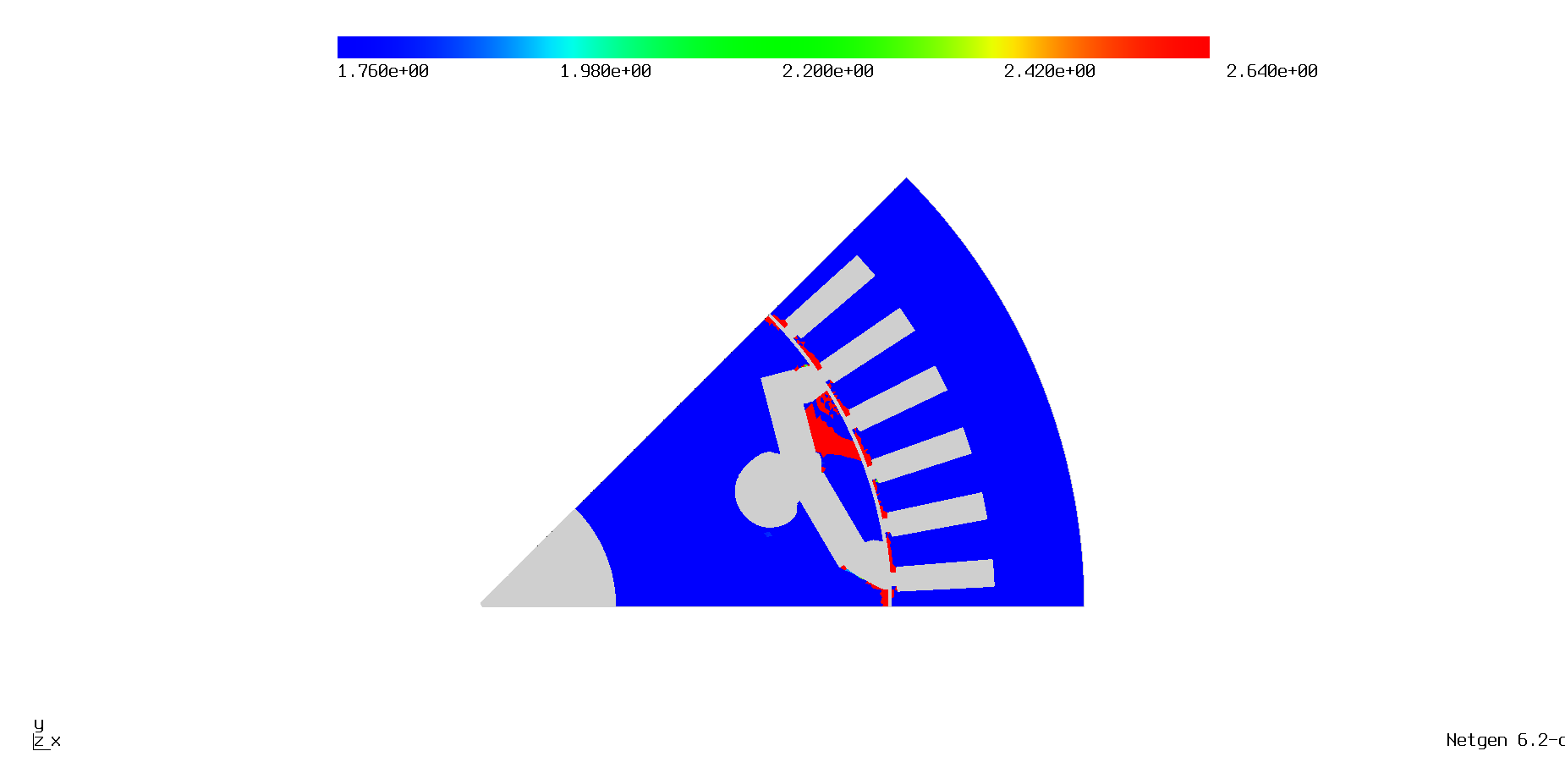}\quad\includegraphics[width=0.103\textwidth]{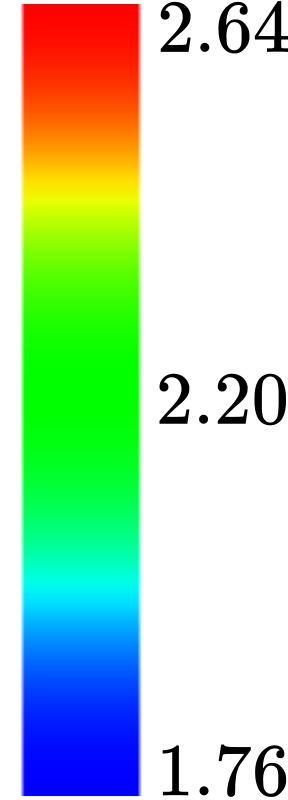}
    \caption{Worst case parameter function for final designs, $q^*(\Omega_\mathrm{DIST})$ (left), $q^*(\Omega_\mathrm{NOM})$ (right) considering a distributed material uncertainty, see Section~\ref{ssubsec:DIST}.}
    \label{fig:param_dist}
\end{figure}
\subsubsection{Single Rotor Position}\label{ssubsec:DIST1}
To highlight the effect of robust optimization we consider the torque for a single rotor position choosing $N=1$ in \eqref{eq:PROBLEM}. We obtain a new result of the nominal optimization $\Omega_\mathrm{NOM1}$ presented in Figure \ref{fig:design_nom_one}. The corresponding worst case parameter distribution $q^*(\Omega_\mathrm{NOM1})$ (Figure \ref{fig:param_dist_one}, left) is very different to the previous one (Figure \ref{fig:param_dist}, left) from Section~\ref{ssubsec:DIST}, where we considered the average torque. The robust optimization using Algorithm \ref{alg:RobustLevelset} results in a slightly different design $\Omega_\mathrm{DIST1}$, shown in Figure \ref{fig:design_dist_one}, with worst case function $q^*(\Omega_\mathrm{DIST1})$ displayed in Figure \ref{fig:param_dist_one} (right). We see a significant change in the worst case function which yields a high performance improvement from $\J(\Omega_\mathrm{NOM1},q^*(\Omega_\mathrm{NOM1}))=-870\mathrm{Nm}$ to $\J(\Omega_\mathrm{DIST1},q^*(\Omega_\mathrm{DIST1}))=-989\mathrm{Nm}$ which is $13.7\%$.

\begin{figure}
    \centering
    \includegraphics[width=0.7\textwidth,trim=20cm 7.3cm 20cm 7.3cm,clip]{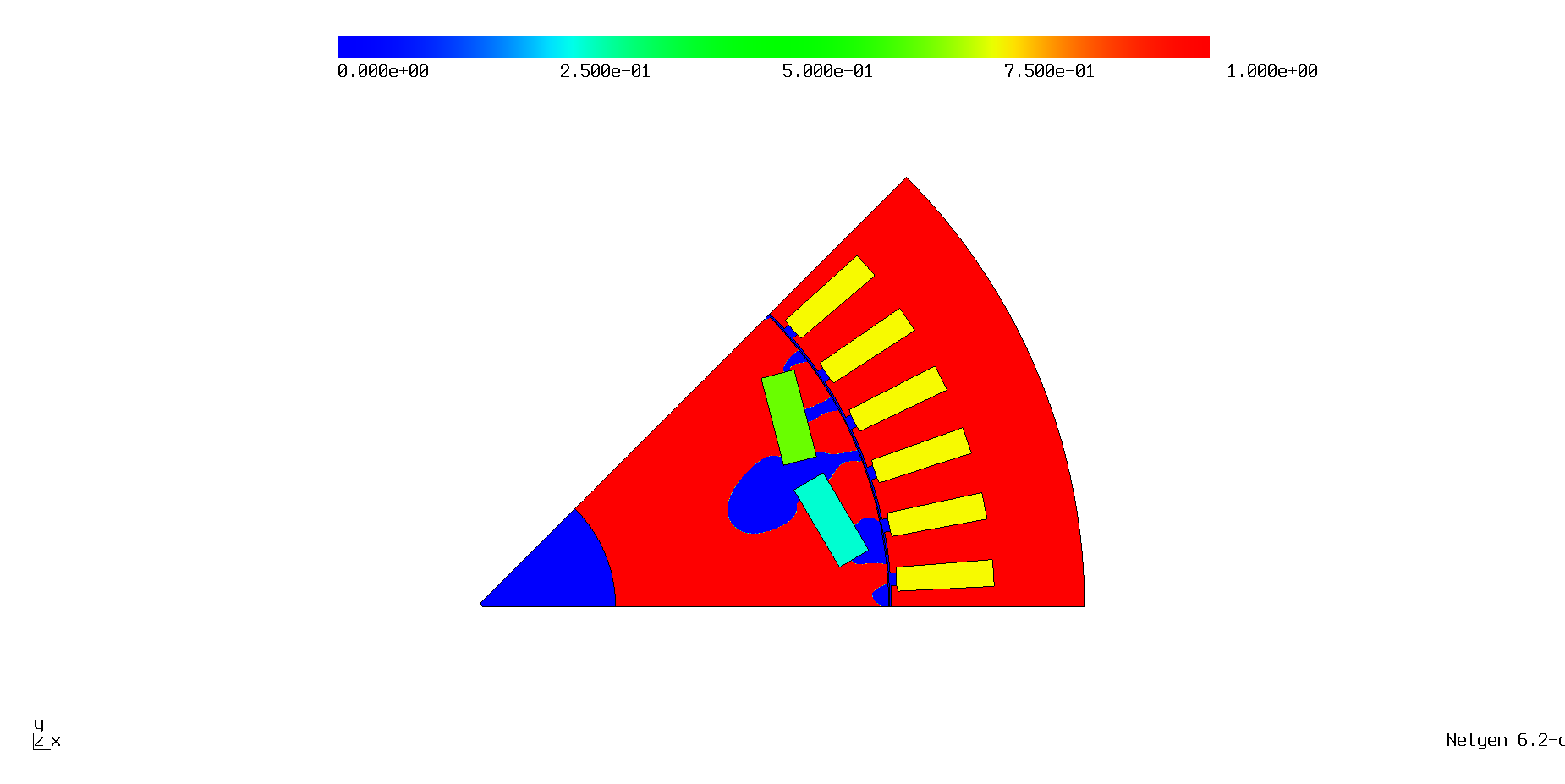}
    \caption{Final design $\Omega_\mathrm{NOM1}$ for nominal optimization considering a single rotor position, see Section~\ref{ssubsec:DIST1}.}
    \label{fig:design_nom_one}
\end{figure}
\begin{figure}
    \centering
    \includegraphics[width=0.45\textwidth,trim=20cm 7.3cm 20cm 7.3cm,clip]{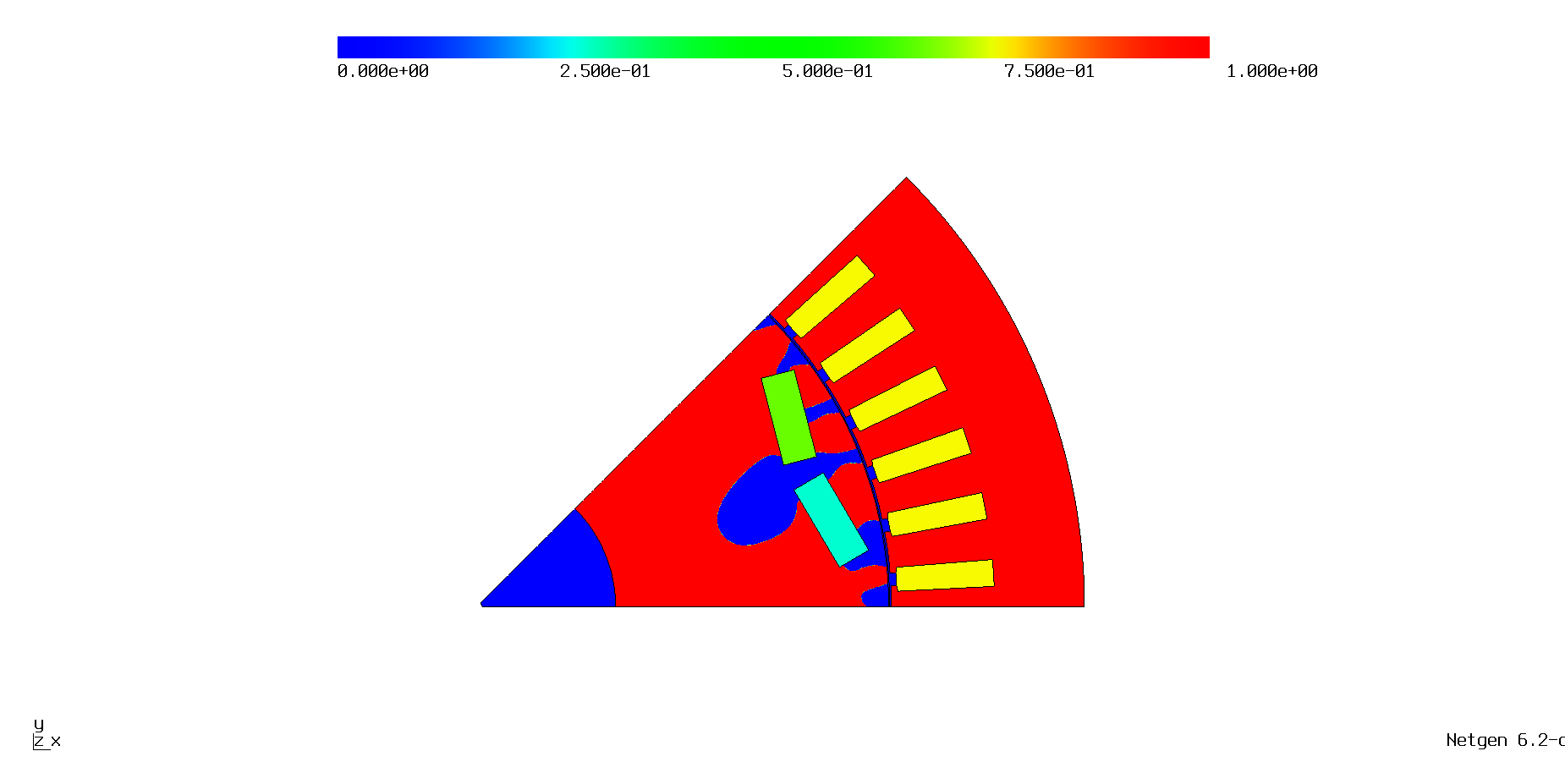}\quad\includegraphics[width=0.45\textwidth,trim=20cm 7.3cm 20cm 7.3cm,clip]{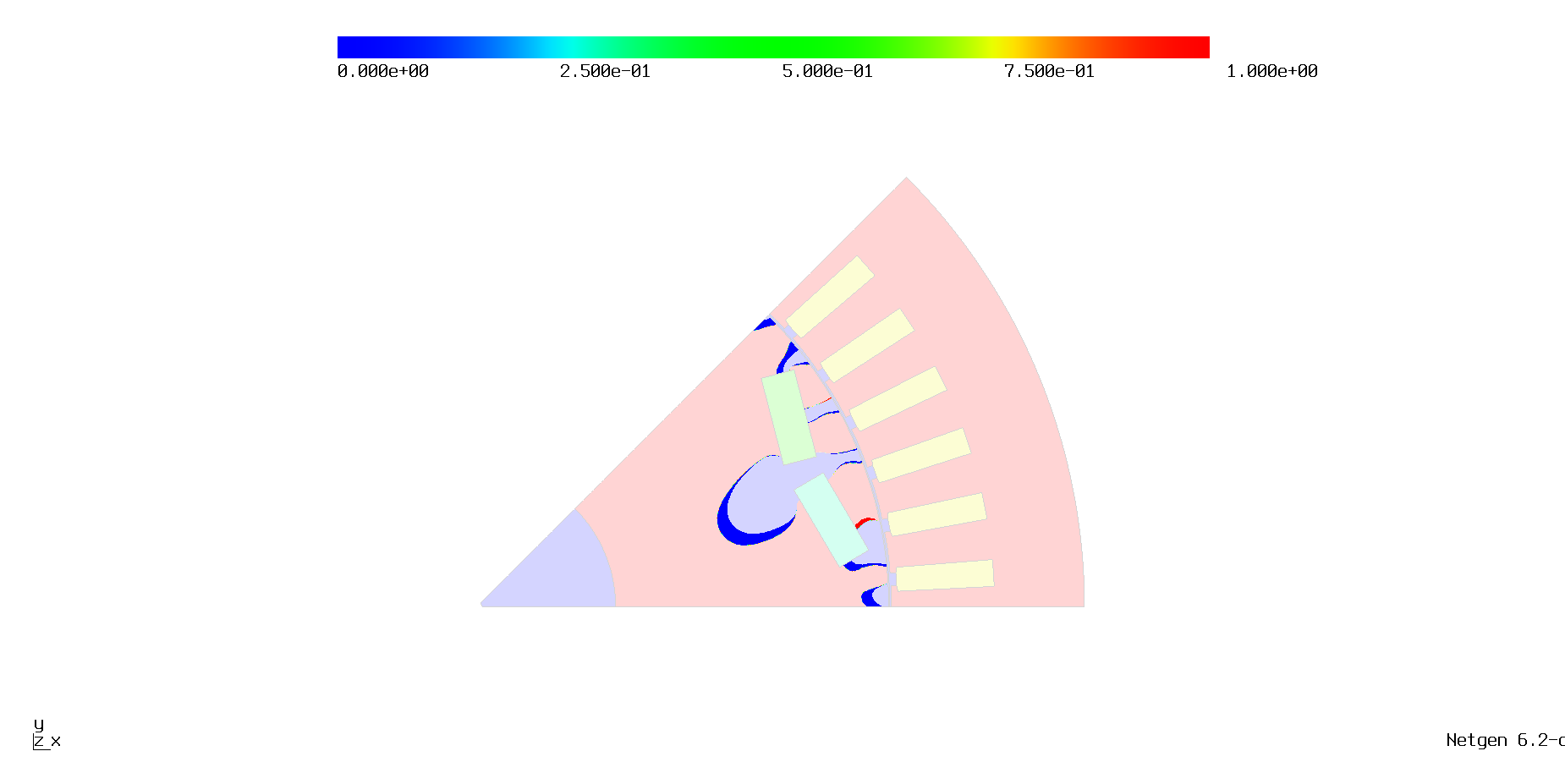}
    \caption{Final design $\Omega_\mathrm{DIST1}$ for nominal optimization (left), difference from the nominal result $\Omega_\mathrm{NOM1}$(right) considering a distributed material uncertainty and a single rotor position, see Section~\ref{ssubsec:DIST1}.}
    \label{fig:design_dist_one}
\end{figure}
\begin{figure}
    \centering
    \includegraphics[width=0.4\textwidth,trim=20cm 7.3cm 20cm 7.3cm,clip]{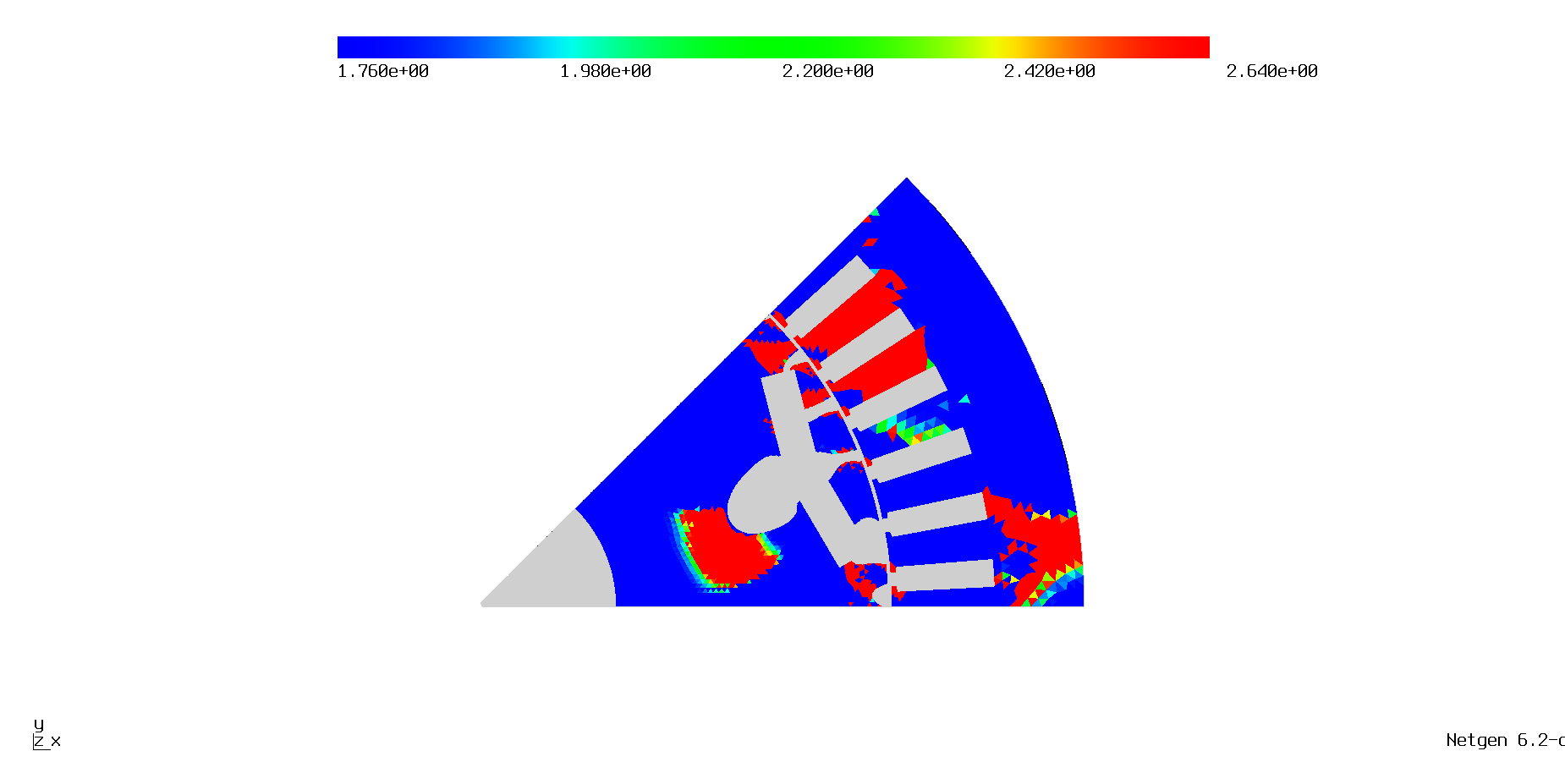}\quad\includegraphics[width=0.4\textwidth,trim=20cm 7.3cm 20cm 7.3cm,clip]{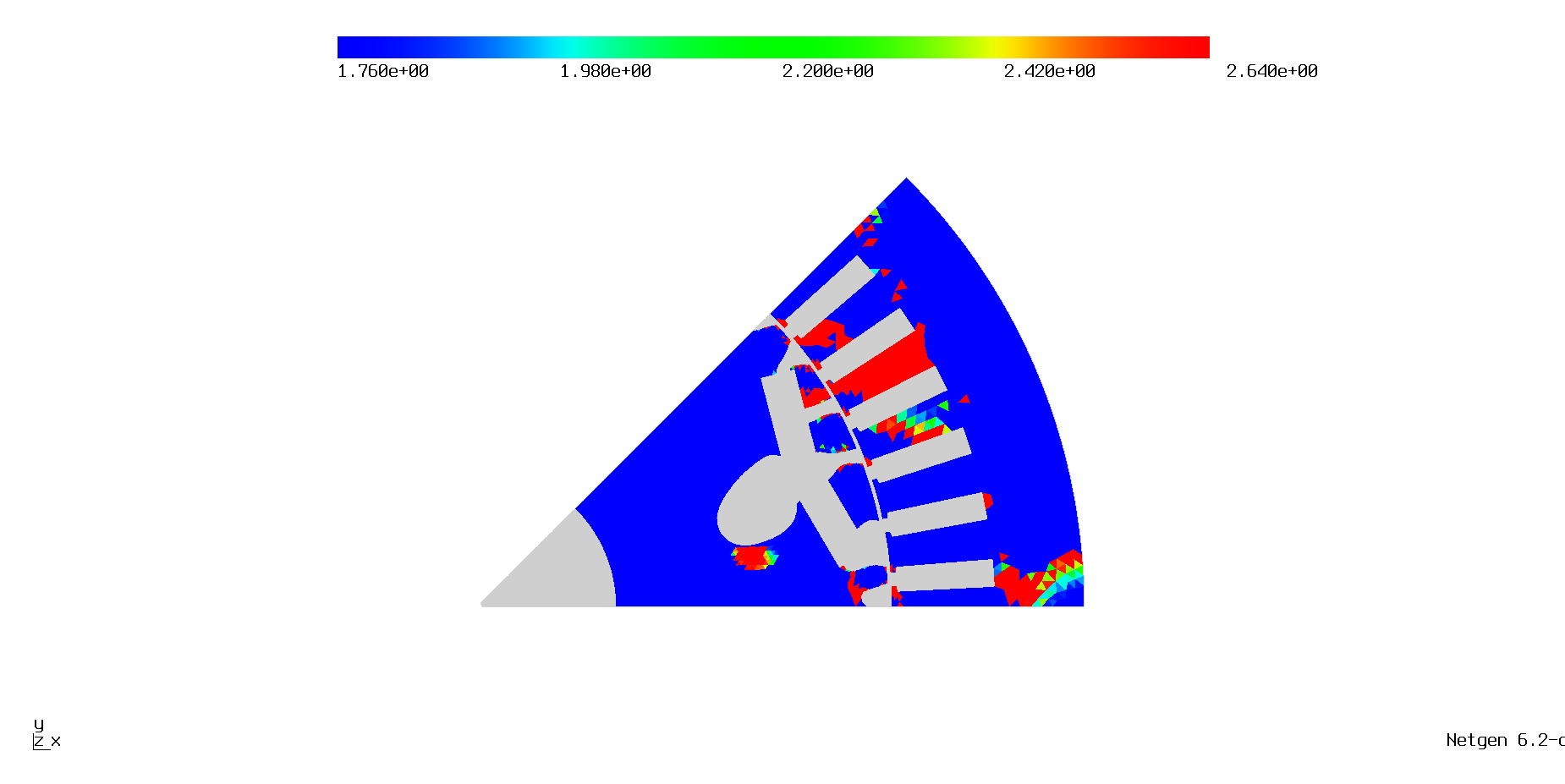}\quad\includegraphics[width=0.103\textwidth]{colorbar.png}
    \caption{Worst case parameter functions for final designs, $q^*(\Omega_\mathrm{NOM1})$ (left), $q^*(\Omega_\mathrm{DIST1})$ (right) considering a distributed material uncertainty and a single rotor position, see Section~\ref{ssubsec:DIST1}.}
    \label{fig:param_dist_one}
\end{figure}
\begin{table}
    \centering
    \begin{tabular}{l|rr}
         Name&Nominal [Nm]&Robust [Nm] \\\hline
         ANG&-757&-778\\
         SCAL&-696&-701\\
         DIST&-692&-696\\
         DIST1&-870&-989
    \end{tabular}
    \caption{Overview of worst case function values for different applications: Uncertain load angle (ANG), see Section~\ref{subsec:ANG}; uncertain scalar material parameter (SCAL), see Section~\ref{ssubsec:SCAL}; uncertain distributed material parameter for average torque (DIST), see Section~\ref{ssubsec:DIST}; single rotor position (DIST1), see Section~\ref{ssubsec:DIST1}. }
    \label{tab:comparison}
\end{table}
\subsection{Computation Time}
All computations were done using a single Intel i5 core with 2.4GHz. We present an overview of the optimization for all examples in Table \ref{tab:time}. We see that for the nominal optimizations we performed one gradient evaluation per iteration. Due to the line search we needed more than one function evaluation per iteration for all cases. The algorithm converged slower in terms of number of iterations for the optimizations where we considered a single rotor position (NOM1, DIST1). This indicates that considering the average torque has a smoothing effect on the design optimization problem. In the last column of Table \ref{tab:time} we present the cost of the robust optimization relative to the nominal optimization. The computational overhead of the robust optimizations comes from the evaluation of the worst case function which includes solving the inner maximization problem. The robust optimizations were between 2.1 and 11.9 times slower than the nominal optimizations, except in the case where we optimized the average torque considering a distributed material uncertainty (DIST). If we account for the fact that due to physical reasons we had to consider more rotor positions we may scale the computation time for this case and result at an overhead factor $26.5/3=8.8$ which is in the range of the other robust optimizations.
\begin{table}
    \centering
    \begin{tabular}{l|rrrrrrr}
         Name&Iterations&\makecell{Function\\ evaluations}&\makecell{Gradient \\ evaluations}&\makecell{Rotor\\ positions}&Time [s]&\makecell{Factor to \\ nominal}\\\hline
         NOM&29&46&29&11&620&1\\
         ANG&76&513&203&11&4628&7.5\\
         SCAL&30&165&71&11&1332&2.1\\
         DIST&23&763&304&33&16428&26.5\\\hline
         NOM1&65&88&65&1&111&1\\
         DIST1&67&1973&796&1&1324&11.9
    \end{tabular}
    \caption{Overview of computational effort for different design optimizations: Nominal optimization (NOM), see Section~\ref{subsec:NOM}, robust optimization with uncertain load angle (ANG), see Section~\ref{subsec:ANG}; uncertain scalar material parameter (SCAL), see Section~\ref{ssubsec:SCAL}; uncertain distributed material parameter for torque (DIST), see Section~\ref{ssubsec:DIST}; nominal optimization for single rotor position (NOM1), see Section~\ref{ssubsec:DIST1}; robust optimization with distributed material uncertainty for single rotor position (DIST1), see Section~\ref{ssubsec:DIST1}.}
    \label{tab:time}
\end{table}

 \section{Conclusion and Future Work}

This paper presented a novel and efficient method for robust design optimization of electric machines, combining the topological derivative and a level set-based approach with a robust optimization framework. The method efficiently solves min-max problems to find designs that are robust against material uncertainties. The proposed approach was applied to a two-material robust topology optimization of a PMSM machine, demonstrating improved worst case performance with minimal trade-offs in nominal performance. Although the parameters load angle and material saturation were used, any parameter can be selected to be robustified. The results highlight the potential of robust topology optimization for designing reliable and efficient electric machines. We have proven theoretical results for linear PDE constraints. However, the method has also turned out to be effective in the more general case of quasilinear PDEs. A theoretical justification for this generalization is the subject of ongoing research. Future work could focus on several promising extensions of this framework. One direction is to incorporate the geometric uncertainty through the level set function, which would represent the manufacturing tolerances due to a production process. In addition, the approach could be applied to other engineering problems with complex design requirements, demonstrating its application across disciplines. Finally, uncertainty quantification techniques could be used to model more realistic and data-driven uncertainty sets, making them more applicable to industry.
\label{sec:Conclusion}
\appendix
\section{Appendix}\label{sec:app}
The following lemma is based on results presented in \citep{Novotny2013} where the topological derivative for a linear diffusion problem is obtained as a limit of the shape derivative with respect to the variation of a circular hole as the hole's radius tends to zero. We use these results to show that Assumption 1 is satisfied in the case of linear material behavior.
\begin{Lemma}\label{lem:app}
    Let $\J:\E\rightarrow\R$ be the reduced cost function of the nominal problem \eqref{eq:reducedPROBLEM} with a linear PDE constraint, i.e. $h_f(b)=\nu_fb$ in \eqref{eq:magnetostatic}. Then Assumption \ref{ass:ass} holds true.
\end{Lemma}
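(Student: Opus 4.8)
The plan is to show that the map $\tau\mapsto\frac{1}{|\omega|}\J(\Omega_{\sqrt{\tau}}(z))$ inherits its $C^1$ regularity on a small interval $[0,\tau_0]$ from the known relationship, in the linear diffusion setting, between the shape derivative of $\J$ with respect to a circular hole of radius $\epsilon$ and the topological derivative obtained as $\epsilon\searrow 0$. Concretely, I would fix $z\in\Omega$ and, without loss of generality by the symmetry of the problem under interchanging the roles of iron and air, treat the case $z\in\Omega_f$, so that $\Omega_\epsilon(z)=(\Omega_f\setminus\overline{\omega_\epsilon(z)},\Omega_a\cup\overline{\omega_\epsilon(z)})$. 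By Remark~\ref{rem:diffusion} the linear magnetostatic state equation is (a rotated copy of) a linear diffusion equation with piecewise constant coefficient $\nu$ jumping across $\partial\omega_\epsilon(z)$; the cost functional $\J$ is a smooth (in fact affine-quadratic) function of the state $(u^n,\lambda^n)$, $n=0,\dots,N-1$, via \eqref{eq:torque} and the harmonic mortar coupling \eqref{eq:magnetostatic}. I would then invoke the results of \citep{Novotny2013}, which establish that for such a linear diffusion problem the perturbed cost $\epsilon\mapsto\J(\Omega_\epsilon(z))$ admits the expansion $\J(\Omega_\epsilon(z))=\J(\Omega)+|\omega_\epsilon(z)|\,\mathcal{T}(z)+o(\epsilon^2)$ with a remainder that is in fact smooth in $\epsilon^2$ near $0$, i.e. the shape derivative of $\J$ with respect to the hole radius exists and is analytic (or at least $C^1$) in $\epsilon$ for small $\epsilon>0$, with a well-defined limit as $\epsilon\searrow 0$ equal to $|\omega|$ times the topological derivative.

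The key steps, in order, are: (i) rewrite the linear state problem on $D_\mathrm{all}$ with an inclusion $\omega_\epsilon(z)$ of conductivity $\nu_0$ inside the iron region as a transmission problem and record its dependence on $\epsilon$; (ii) use the analyticity/differentiability of solutions of linear transmission problems with respect to the radius of a circular inclusion (this is the content imported from \citep{Novotny2013} via the asymptotic corrector $K_U$ of the exterior problem \eqref{eq:exterior}, which here has the explicit form $-\frac{\nu_0-\nu_f}{\nu_0+\nu_f}U\cdot(\xi\chi_\omega+\frac{\xi}{|\xi|^2}\chi_{\R^2\setminus\overline\omega})$, and the corresponding explicit topological derivative $\sum_n 2\nu_0\frac{\nu_0-\nu_f}{\nu_0+\nu_f}U_z^n\cdot P_z^n$); (iii) propagate this regularity through the adjoint/Lagrangian formalism of \eqref{eq:Lagrangian}--\eqref{eq:adjoint} to conclude that $\epsilon\mapsto\J(\Omega_\epsilon(z))$ is $C^1$ on $(0,\epsilon_0)$ with a derivative that extends continuously to $\epsilon=0$; (iv) perform the change of variables $\tau=\epsilon^2$, noting that the expansion is genuinely in powers of $\epsilon^2=\tau$ (the odd-order terms vanish by the circular symmetry of $\omega$), so that $\tau\mapsto\frac{1}{|\omega|}\J(\Omega_{\sqrt\tau}(z))$ is $C^1$ on $[0,\tau_0]$ with $\tau_0=\epsilon_0^2$, and its one-sided derivative at $0$ equals the topological derivative, exactly as required in Assumption~\ref{ass:ass}; (v) handle uniformity in $z\in\Omega$ by noting that the relevant constants depend only on $\mathrm{dist}(z,\partial\Omega_f\cup\partial\Omega_a)$ and the uniform Lipschitz constant $L_E$ of the admissible configurations, together with the ellipticity and boundedness of $\nu$.

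The main obstacle is step (iii)--(iv): one must verify that the mortar coupling term $\langle\lambda_\alpha,(v|_{D_S}-v|_{D_R}\circ\rho_{-\alpha})\rangle_{\Gamma_R}$ and the torque output \eqref{eq:torque}, which involve a nonlocal boundary coupling on $\Gamma_R$ rather than a purely local volume functional, do not destroy the smooth dependence on $\epsilon$. Since the perturbation $\omega_\epsilon(z)$ lies in the interior of the rotor design domain $D\subset D_R$, away from $\Gamma_R$, standard interior-regularity and domain-perturbation arguments ensure that the trace of $u_\alpha$ on $\Gamma_R$ — and hence $\lambda_\alpha$ and $\mathcal{T}(\alpha)$ — depends smoothly on $\epsilon$; this is essentially a consequence of the fact that the Green's operator of the unperturbed transmission problem is analytic in $\epsilon$ on the hole radius, and the output functional is a bounded linear functional of the state. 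Carefully checking that the inf-sup stability of the mortar discretization (cf. the third Remark after \eqref{eq:magnetostatic}) is preserved uniformly in $\epsilon$ is the one place where the argument from \citep{Novotny2013} must be adapted rather than quoted verbatim; once this is in hand, the remaining manipulations are routine and the lemma follows.
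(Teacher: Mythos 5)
Your proposal lands in the right neighborhood---the paper's proof also reduces to the linear diffusion setting of Remark~\ref{rem:diffusion} and imports the key facts from \citep{Novotny2013}---but the mechanism you offer for the $C^1$ regularity up to $\tau=0$ has a gap. You assert that $\J(\Omega_\epsilon(z))$ admits a topological asymptotic expansion whose remainder is \emph{smooth in $\epsilon^2$}, with odd powers of $\epsilon$ vanishing by circular symmetry. Neither claim is established in \citep{Novotny2013} (which provides the first-order expansion and a limit identity, not analyticity of the remainder), and neither is needed. The argument actually rests on two separate, weaker ingredients. First, for each fixed $\tau>0$ the map $\hat g(\tau)=\frac{1}{|\omega|}\J(\Omega_{\sqrt\tau}(z))$ is differentiable because enlarging the hole radius is an ordinary shape perturbation in the direction of a radial field $V^{\mathrm{rad}}_{\sqrt\tau}(z)$ equal to the outer unit normal on $\partial\omega_{\sqrt\tau}(z)$; the chain rule with the substitution $t=\sqrt{\tau+\delta}-\sqrt\tau$ gives
\begin{align*}
\hat g'(\tau)=\frac{1}{2|\omega|\sqrt{\tau}}\,\id^S\J(\Omega_{\sqrt\tau}(z))\bigl(V^{\mathrm{rad}}_{\sqrt\tau}(z)\bigr).
\end{align*}
Second, the identity of \citep[Prop.~1.1]{Novotny2013},
\begin{align*}
\frac{\id}{\id\Omega}\J(\Omega)(z)=\lim_{\epsilon\searrow0}\frac{1}{2|\omega|\epsilon}\,\id^S\J(\Omega_\epsilon(z))\bigl(V^{\mathrm{rad}}_\epsilon(z)\bigr),
\end{align*}
states precisely that the right-hand side above converges, as $\tau\searrow0$, to the topological derivative, which a direct computation from \eqref{eq:TDdef} identifies with $\hat g'(0)$. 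The factor $\frac{1}{2\sqrt\tau}$ produced by the reparametrization $\tau=\epsilon^2$ is exactly the scaling appearing in that identity; without making this cancellation explicit you have no control over $\hat g'(\tau)$ as $\tau\searrow0$, and the continuity of the derivative at $\tau=0$ is the entire content of Assumption~\ref{ass:ass}. Your phrase ``analytic (or at least $C^1$) in $\epsilon$ with a well-defined limit'' gestures at the right statement but substitutes an unproved regularity claim for the one computation that makes it true.

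A secondary remark: the effort you invest in the mortar coupling and the uniform inf-sup condition is misplaced here. The inf-sup condition concerns the discretization of the Lagrange multiplier space and plays no role in this continuous-level regularity statement; since $\omega_\epsilon(z)$ stays in the interior of $D\subset D_R$, away from $\Gamma_R$, the torque is a shape-differentiable output of the transmission problem and requires no treatment beyond what \citep{Novotny2013} already provides for linear diffusion. Likewise, no uniformity in $z$ is required by Assumption~\ref{ass:ass} beyond choosing $\tau_0$ so that $\omega_{\sqrt{\tau_0}}(z)$ does not meet the material interface.
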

\begin{proof}
We will use the shape derivative according to \citep{Zolesio1992}. 
     A shape function $\J:\E\rightarrow\R$ is said to be shape differentiable if the limit
     \begin{align}
         \id^S\J(\Omega)(V):=\lim_{t\rightarrow 0}\frac{\J((\mathrm{id}+tV)(\Omega))-\J(\Omega)}{t}
     \end{align}
exists for all $V\in C_c^\infty(D)^2$ in $\R$, where $\mathrm{id}$ is the identity map, and the map $V\mapsto \id^S\J(\Omega)(V)$ is linear and continuous with respect to the topology of $C^\infty_c(D)^2$. We call $\id^S\J(\Omega)$ the shape derivative of $\J$.
    For $z\in\Omega,\epsilon>0$ we define a smooth vector field $V^\mathrm{rad}_\epsilon(z)\in C^\infty_c(D)^2$ which coincides on the boundary of the perturbation $\omega_\epsilon(z)$ with the outer unit normal vector
\begin{align}\label{eq:Vrad}
    V^\mathrm{rad}_\varepsilon(z)|_{\partial\omega_\epsilon}=\frac{x-z}{\epsilon}.
\end{align}
As mentioned in Remark \ref{rem:diffusion} we can rewrite the constraining PDE \eqref{eq:magnetostatic} as a linear diffusion equation. For problems of this kind, the identity from \citep[Prop. 1.1]{Novotny2013}
    \begin{align}\label{eq:NOSOidentiy}
    \frac{\id}{\id\Omega}\J(\Omega)(z)=\lim_{\epsilon\searrow 0}\frac{1}{2|\omega|\epsilon}\id^S\J(\Omega_\epsilon(z))(V^\mathrm{rad}_\epsilon(z))
\end{align}
holds, see \citep[Sec. 5.1]{Novotny2013}. Let $\tau_0>0$ be small enough such that $\omega_{\sqrt{\tau_0}}(z)$ does not intersect with other materials, i.e. $\partial\omega_{\sqrt{\tau_0}}(z)\cap(\partial\Omega_f\cup\partial\Omega_a)=\emptyset.$ For $\tau\in(0,\tau_0], \delta\in(-\tau,\tau_0-\tau)$ we interpret $\Omega_{\sqrt{\tau+\delta}}(z)=(\mathrm{id}+(\sqrt{\tau+\delta}-\sqrt{\tau})V_{\sqrt{\tau}}^\mathrm{rad}(z))(\Omega_{\sqrt{\tau}}(z))$ as the deformation of $\Omega_{\sqrt{\tau}}(z)$ by $V^\mathrm{rad}_{\sqrt{\tau}}$ as defined in \eqref{eq:Vrad}.
Using the shape differentiability of $\J$ we conclude for the function $\hat{g}(\tau)=\frac{1}{|\omega|}\J(\Omega_{\sqrt{\tau}}(z))$
\begin{align*}
    \hat{g}'(\tau)&=\lim_{\delta\rightarrow 0}\frac{\J(\Omega_{\sqrt{\tau+\delta}}(z))-\J(\Omega_{\sqrt{\tau}}(z))}{|\omega|\delta}\\&=\lim_{\delta\rightarrow 0}\frac{\J((\mathrm{id}+(\sqrt{\tau+\delta}-\sqrt{\tau})V_{\sqrt{\tau}}^\mathrm{rad}(z))(\Omega_{\sqrt{\tau}}(z)))-\J(\Omega_{\sqrt{\tau}}(z))}{\sqrt{\tau+\delta}-\sqrt{\tau}}\frac{\sqrt{\tau+\delta}-\sqrt{\tau}}{|\omega|\delta}\\&=\lim_{t\rightarrow0}\frac{\J((\mathrm{id}+t V_{\sqrt{\tau}}^\mathrm{rad}(z))(\Omega_{\sqrt{\tau}}(z)))-\J(\Omega_{\sqrt{\tau}}(z))}{t}\lim_{\delta\rightarrow 0}\frac{\sqrt{\tau+\delta}-\sqrt{\tau}}{|\omega|\delta}\\&=\id^S\J(\Omega_{\sqrt{\tau}}(z))(V_{\sqrt{\tau}}^\mathrm{rad}(z))\frac{1}{2|\omega|\sqrt{\tau}},
\end{align*}
which exists since $\tau>0$. Here we used the substitution $t=\sqrt{\tau+\delta}-\sqrt{\tau}$. For $\tau=0$ we have similarly as in Lemma \ref{lem:gtilde}
\begin{align*}
    \hat{g}'(0)=\lim_{\delta\searrow 0}\frac{\hat{g}(\delta)-\hat{g}(0)}{\delta}=\lim_{\epsilon\searrow 0}\frac{\J(\Omega_{\epsilon}(z))-\J(\Omega)}{|\omega|\epsilon^2}=\frac{\id}{\id\Omega}\J(\Omega)(z).
\end{align*}
Finally we show that the derivative is continuous in $0$ by applying \eqref{eq:NOSOidentiy}
\begin{align*}
    \lim_{\tau\searrow 0}\hat{g}'(\tau)&=\lim_{\tau\searrow 0}\id^S\J(\Omega_{\sqrt{\tau}}(z))(V_{\sqrt{\tau}}^\mathrm{rad}(z))\frac{1}{2|\omega|\sqrt{\tau}}\\&=\lim_{\epsilon\searrow 0}\id^S\J(\Omega_{\epsilon}(z))(V_{\epsilon}^\mathrm{rad}(z))\frac{1}{2|\omega|\epsilon}=\frac{\id}{\id\Omega}\J(\Omega)(z)=\hat{g}'(0).
\end{align*}
\end{proof}
\begin{remark}
    The missing step to prove Corollary \ref{cor} for the general case of a quasilinear PDE constraint, which would cover our application, is the identity \eqref{eq:NOSOidentiy}. It is, to our knowledge, still open and subject of ongoing research.
\end{remark}

\subsection*{Funding}
The work of the authors is partially supported by the joint DFG/FWF Collaborative Research Centre CREATOR (DFG: Project-ID 492661287/TRR 361; FWF: 10.55776/F90) at TU Darmstadt, TU Graz, JKU Linz and RICAM Linz.
Further support is given by the Graduate School CE within the Centre for Computational Engineering at TU Darmstadt. P.G. is partially supported by the State of Upper Austria.

\end{document}